\algnewcommand{\Initialize}[1]{%
	\State \textbf{Initialize:}
	\State \hspace*{\algorithmicindent}\parbox[t]{0.8\linewidth}{\raggedright #1}
}
\newtheorem{thm}{Theorem}
\newtheorem{lem}[thm]{Lemma}
\newenvironment{rem}{\begin{remark}\rm}{\end{remark}}
\newtheorem{prop}[thm]{Proposition}
\newtheorem{Definition}[thm]{Definition}
\newenvironment{dfn}{\begin{Definition}\rm}{\end{Definition}}
\newtheorem{Corollary}[thm]{Corollary}
\newtheorem{Example}{Example}[section]
\newtheorem{algor}[thm]{Method}
\newtheorem{Condition}[thm]{Condition}
\newcommand{\R}{\mathbb{R}}
\newcommand{\dd}{\,\mathrm{d}}
\newcommand{\ve}{\varepsilon}
\renewcommand{\phi}{\varphi}
\newcommand{\bm}[1]{{\mbox{\boldmath $#1$}}}
\DeclareMathOperator{\cv}{conv}
\DeclareMathOperator{\ri}{ri}
\DeclareMathOperator{\supp}{supp}
\DeclareMathOperator{\aff}{aff}
\newcommand{\E}[1]{\mathrm{E}\left[#1\right]}
\newcommand{\ds}{\mathrm{d}}
\newcommand{\cd}{\circ\ds}
\newcommand{\ccd}{\circ\,\ds}
\newcommand{\A}{\mathcal{A}}
\journalname{Japan Journal of Industrial and Applied Mathematics}
\begin{document}

\title{Monte Carlo construction of cubature on Wiener space
}


\author{Satoshi Hayakawa\and
	Ken'ichiro Tanaka}


\institute{Satoshi Hayakawa \at
	Mathematical Institute,
	University of Oxford,
	Andrew Wiles Building,
	Radcliffe Observatory Quarter,
	Woodstock Road,
	Oxford,
	OX2 6GG, United Kingdom\\
	\email{hayakawa@maths.ox.ac.uk}           
	\and
	Ken'ichiro Tanaka \at
	Graduate School of Information Science and Technology,
	The University of Tokyo,
	7-3-1, Hongo, Bunkyo-ku, Tokyo, Japan
}
\date{}

\maketitle

\begin{abstract}
	In this paper, we investigate application of mathematical optimization
	to construction of a cubature formula on Wiener space,
	which is a weak approximation method of stochastic differential equations
	introduced by Lyons and Victoir
	(Cubature on Wiener Space, Proc. R. Soc. Lond. A 460, 169--198).
	After giving a brief review on the cubature theory on Wiener space,
	we show that a cubature formula of general dimension and degree
	can be obtained through a Monte Carlo sampling and linear programming.
	This paper also includes an extension of stochastic Tchakaloff's theorem,
	which technically yields the proof of our primary result.
	\keywords{Weak approximation of SDE \and Cubature on Wiener space \and Lie algebra \and Tchakaloff's theorem \and Monte Carlo sampling}
\end{abstract}

\section{Introduction}

Cubature on Wiener space \cite{lyo04}
is a certain family of numerical formula
for approximating the expectation of functionals
of diffusion processes,
which are important in mathematical finance and other related fields.
The existence of cubature formula of general dimension and degree
has been known,
but the constructions given in the literature were
based on algebraic structure among continuous paths
and Brownian motion,
and limited to ones of low degree and dimension.
Our aim in this paper is to obtain a method of constructing
general cubature formula on Wiener space
through mathematical optimization.

More concretely,
we are interested in approximating values in the form $\E{f(X_T)}$,
where $(X_t)_{0\le t\le T}$
is a solution of stochastic differential equation (SDE)
driven by a multidimensional Brownian motion,
and $f$ is a function with some regularity (e.g., Lipschitz continuity).
The standard approach consists of two steps:
\begin{itemize}
	\item[(1)]
		approximate the distribution of $X_{t+\Delta t}$
		from the information of (approximated) $X_t$,
		where $\Delta t \ll 1$;
	\item[(2)]
		split the time interval $[0, T]$ by
		$0 = t_0 < t_1 < \cdots < t_k = T$ and
		sequentially apply (1) over each $[t_\ell, t_{\ell+1}]$.
\end{itemize}
For example, the stochastic process
$\tilde{X}^{\mathrm{EM}, (k)}_T$ given
by the Euler--Maruyama method with equal partition attains
$\left\lvert \mathrm{E}\,[f(\tilde{X}^{\mathrm{EM}, (k)}_T)]
- \E{f(X_T)} \right\rvert = \mathrm{O}(1/k)$
with respect to $k$, the number of partitions
\cite{klo92}.

To achieve higher precision of approximation such as
$\mathrm{O}(1/k^2)$,
Lyons and Victoir \cite{lyo04} introduced
the cubature on Wiener space,
which uses the higher order of stochastic Taylor expansion
in step (1).
The basic scheme for (1) in cubature on Wiener space is as follows:
\begin{description}
	\item[(c1)]
		approximate the distribution of Brownian motion
		$(B_s)_{t\le s\le t+\Delta t}$
		by a weighted discrete set of deterministic paths;
	\item[(c2)]
		locally (over $[t, t+\Delta t]$)
		solve ordinary differential equations (ODEs)
		driven by the deterministic paths in (c1)
		instead of the stochastic differential equations
		for approximating the distribution of $X_{t+\Delta t}$;
\end{description}
Among the steps (c1), (c2), (2) in cubature on Wiener space,
this paper primarily focuses on finding a good weighted set
of deterministic paths described in (c1).

In the field of high-order approximation of SDEs,
there is also a relative of cubature on Wiener space
called Kusuoka approximation \cite{kus04} using a random paths
in (c1),
which is followed by the papers \cite{nin08,nin09}
presenting concrete second-order schemes.
The main challenge shared by these approaches
(cubature on Wiener space, Kusuoka approximation)
is that constructing such a (random) set of paths
has been performed by actually solving Lie-algebraic equations
and is limited to low dimension and degree of precision.
Therefore, our objective is to find a way of generally
constructing such a formula in arbitrary dimension and degree.

\paragraph{Contribution of this study}
Broadly speaking,
the contribution of this study comprises the following two items:
\begin{itemize}
	\item (main contribution)
	We show that one can construct a cubature formula on Wiener space
	of general dimension and degree with a randomized algorithm.
	\item (technical contribution)
	To apply the technique of \cite{hayakawa-MCCC} to the problem
	of cubature on Wiener space,
	we characterize the affine hull
	of the distribution of iterated Stratonovich integrals
	and prove stochastic Tchakaloff's  theorem
	in a stronger way.
\end{itemize}

The main result with a simple Monte Carlo construction is given
in Proposition \ref{prop:pw-generate}.
It asserts that a certain random generation of
piecewise linear paths combined with a linear programming yields
a cubature formula on Wiener space.

As a technical contribution, we extend stochastic Tchakaloff's  theorem
\cite{lyo04}, which assures the existence of cubature formulas on Wiener space.
Although the original statement was just that there exists a cubature formula,
we show that the expectation of iterated Stratonovich integrals
of a Brownian motion ($\E{\bm\phi_\mathrm{W}(B)}$ in \eqref{eq:CoW})
is contained in the relative interior of
$\cv\{\bm\phi_\mathrm{W}(w)\}$ with a valid range of bounded variation (BV) paths $w$
(Theorem \ref{st-tch}).
This stronger statement with ``relative interior"
follows our characterization (Proposition \ref{BM-full}) of
$\aff\supp\mathrm{P}_{\bm\phi_\mathrm{W}(B)}$ in terms of \eqref{eq:CoW}
and is essential in exploiting the existing construction
of general cubature formula \cite{hayakawa-MCCC}.

We only treat the part (c1) in this study,
but it is important to consider combining our construction
with ODE solvers (c2) and some techniques reducing computational complexity
(2)
such as recombination \cite{lit12},
which needs further investigation and is deferred to future research.

\paragraph{Outline}
We give a brief overview of the following sections.

Section \ref{chap:prep} describes the idea and background of
this study in a more mathematical way.
We briefly explain the concept of cubature on Wiener space
in Section \ref{subsec:CoW} and the overview of a preceding result
recently given by one of the authors \cite{hayakawa-MCCC}
in Section \ref{prelim}.

Section \ref{chap:CoW} is devoted to a theoretical review of
cubature on Wiener space by \cite{lyo04}.
We introduce the facts around vector fields and
relevant notations in Section \ref{sec:vector-fields} used throughout the paper.
The precise definition and error estimate of cubature on Wiener space
is given in Section \ref{sec:formulation},
and Section \ref{sec:const} provides information about
known constructions based on algebraic arguments.

In Section \ref{sec:st-tch}, we give the extended statement
of stochastic Tchakaloff's theorem and its proof.
Sections \ref{sec:algebra} and \ref{sec:signature} provide
algebraic background of cubature construction.
Section \ref{proof-stt} is devoted to the proof of our version of stochastic
Tchakaloff's theorem, and it also includes the characterization
of the distribution of iterated Stratonovich integrals
from the viewpoint of the affine hull of the support.

We discuss a way to obtain piecewise linear cubature formula on Wiener space
in Section \ref{chap:optCoW}.
After giving some general properties of continuous BV paths
in our context in Section \ref{sec:sig-BV},
we prove our main result in Section \ref{MCWC} that
we can generally construct cubature formulas on Wiener space
with simple randomized algorithms.
Section \ref{num-exp} represents the numerical verification of our result
in a small range of parameters.

Finally we summarize our conclusion in Section \ref{sum-dis}.


\section{Preliminaries}\label{chap:prep}

In this section, we shall briefly explain the background and motivation
of this study.
Section \ref{subsec:CoW} gives a brief explanation of the cubature theory
on Wiener space \cite{lyo04} and identifies the problem to be solved.
Section \ref{prelim} introduces the recent Monte Carlo approach by
\cite{hayakawa-MCCC} to general cubature construction,
which our method in this paper is based on. We explain the relation
between cubature on Wiener space and a generalized one, as well as
the particular difficulty that arise on the Wiener space.

\subsection{Cubature on Wiener space}\label{subsec:CoW}
Let $B=(B^1, \ldots, B^d)$ be a $d$-dimensional standard Brownian motion.
Let $C_b^\infty(\R^N; \R^N)$ be the space of infinitely differentiable $\R^N$-valued functions defined on $\R^N$ whose every order of derivative is bounded.
Let us consider the following $N$-dimensional Stratonovich SDE:
\begin{equation}
	\ds X_t = \sum_{i=1}^d V_i(X_t) \cd B_t^i + V_0(X_t)\dd t,
	\qquad X_0=x, \label{sde}
\end{equation}
where $x\in\R^N$, $V_i\in C_b^\infty(\R^N;\R^N)$ for $i=0,\ldots,d$.
As the process $X_t$ is dependent on the initial value $x$,
we denote it by $X_t(x)$ if necessary.
We may assume the solution $X_t(x)$
is continuous with respect to $t$ and $x$.
Our aim is to efficiently compute or approximate the expectation
$\E{f(X_t)}$ with $t>0$ and some smooth or Lipschitz $f$.
This sort of approximation is called a {\it weak approximation} of SDE
and well-studied in the literature \cite{klo92,kus01,kus04,lyo04,nin09,nin08,nin03}.

We here focus on the approach introduced in
\cite{lyo04} called {\it cubature on Wiener space}.
Broadly speaking, a cubature formula on Wiener space (of the time interval
$[0, T]$)
is the approximation
\[
\mathrm{P}\simeq \sum_{i=1}^n \lambda_j \delta_{w_j},
\]
where $\mathrm{P}$ is the Wiener measure on the Wiener space
$C_0^0([0, T]; \R^d)$ (the space of $\R^d$-valued continuous
function in $[0, T]$ starting at the origin), 
\[
w_j=(w_j^0, w_j^1,\ldots, w_j^d)\in C_0^0([0, T]; \R\oplus\R^d)
\]
for $j=1,\ldots, n$, and
$\lambda_1,\ldots,\lambda_n$ are positive real weights whose sum equals one.
Instead of polynomials in conventional cubature formulas,
we adopt iterated integrals as the test functionals, that is,
we want to find paths $w_i$ satisfying
\begin{equation}
	\E{\int_{0<t_1<\cdots<t_k<T}\circ\,\ds B_{t_1}^{i_1}
		\cdots \cd B_{t_k}^{i_k}}
	=\sum_{j=1}^n
	\lambda_j\int_{0<t_1<\cdots<t_k<T}\ds w^{i_1}_j(t_1)
	\cdots \ds w_j^{i_k}(t_k)
	\label{intro-cubature}
\end{equation}
over some set of multiindices $(i_1,\ldots,i_k)$,
where the iterated Stratonovich integral appears in the left-hand side.
Precisely speaking, we formally set $B^0_t=t$ for $t\ge0$ and
assume $w_j$ is a path of BV for each $j=1,\ldots,n$.
Although $w_j^0(t)=t$ is also assumed in \cite{lyo04},
here we may generalize and remove this condition.

The iterated integrals appearing in \eqref{intro-cubature} have
a rich algebraic structure (see Section \ref{sec:algebra}),
so algebraic approaches have been adopted in the literature
\cite{lyo04,nin08,gyu11,osh12,shi17,nin19}.
However, solving complicated equations of Lie algebra is required
in those approaches, and constructions of the formula are limited
to a small range (see Section \ref{sec:const}).
Our objective is to give a construction method for a general setting
where there are no limitations on the number of iterations of the integral
($k$ in \eqref{intro-cubature}).
For this purpose, we adopt an optimization-based viewpoint instead of algebraic ones
and extend to our situation the result of \cite{hayakawa-MCCC}, which gives
a randomized construction of general cubature formula.

We should mention the Kusuoka approximation \cite{kus01,kus04,shi17},
which is closely related to cubature on Wiener space.
However, our objective in this paper is limited to the construction
of the cubature on Wiener space, and the optimization-based approach
to the general Kusuoka approximation is deferred for future work.

\subsection{Monte Carlo approach to generalized cubature}\label{prelim}

A cubature formula is originally a numerical integration formula
on some Euclidean space
that exactly integrates polynomials up to a certain degree
\cite{ste16}, the theory of which underlies cubature on Wiener space
introduced in the previous section.
Then, we shall explain it in a generalized setting
and briefly explain the idea of \cite{hayakawa-MCCC} for constructing
general cubature formulas.

Let $(\Omega, \mathcal{G})$
be some measurable space and $X$
be a random variable on it.
A generalized cubature formula with respect to $X$
and integrable functions
$\phi_1, \ldots, \phi_D:\Omega\to\R$ is a set of points
$x_1,\ldots,x_n\in \Omega$
and positive weights, $\lambda_1,\ldots,\lambda_n$ such as 
\[
\E{\phi_i(X)}=\sum_{j=1}^n\lambda_j\phi_i(x_j), \quad i=1,\ldots,D.
\]
For simplicity, we then assume $\phi_1\equiv1$.
In this setting, $\lambda_1+\cdots+\lambda_n=1$ must hold.
We can also regard the above condition as one vector-valued equality
$\E{\bm\phi(X)}=\sum_{j=1}^n\lambda_j\bm\phi(x_j)$,
where $\bm\phi:\Omega\to\R^D$ is defined as 
$\bm\phi=(\phi_1,\ldots,\phi_D)^\top$.
The existence of such formula is assured by the following theorem
\cite{tch57,bay06,EDT}:
\begin{thm}[Generalized Tchakaloff's theorem]\label{MCCC:thm:gen-tchakaloff}
	Under the above setting, there exists a cubature formula
	whose number of points satisfies $n\le D$.
	Moreover, we can take points $x_1,\ldots,x_n$ so as to satisfy
	$\bm\phi(x_j)\in\supp\mathrm{P}_{\bm\phi(X)}$ for each $j=1,\ldots,n$.
\end{thm}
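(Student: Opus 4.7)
The plan is to push everything forward to $\R^D$ and reduce the statement to the geometric fact that the barycenter of a probability measure lies in the convex hull of its support, after which Carathéodory's theorem yields a finite cubature. Concretely, let $\mu = \mathrm{P}_{\bm\phi(X)}$ denote the pushforward measure on $\R^D$, with support $S$ and barycenter $m = \int_{\R^D} y \dd\mu(y) = \E{\bm\phi(X)}$. The goal becomes representing $m$ as a convex combination $\sum_{j=1}^n \lambda_j y_j$ with $y_j \in S$, $\lambda_j > 0$, and $n \le D$, and then lifting each $y_j$ back to an $x_j \in \Omega$.

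First I would show $m \in \cv(S)$. The weaker inclusion $m \in \overline{\cv(S)}$ follows from a one-line Hahn--Banach argument: any closed half-space $\{y : a \cdot y \ge c\}$ containing $S$ also contains $m$, since integrating $a \cdot y \ge c$ against $\mu$ (supported on $S$) gives $a \cdot m \ge c$. To upgrade from the closure to $\cv(S)$ itself, I would induct on the affine dimension of $\supp \mu$: if $m$ lay on the relative boundary of $\overline{\cv(S)}$, a supporting hyperplane $H$ through $m$ would force $a \cdot y = c$ for $\mu$-almost every $y$, hence $\supp \mu \subset H$, reducing the problem to a strictly smaller affine dimension.

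Next I would apply Carathéodory's theorem in $\R^D$, expressing $m$ as a convex combination of at most $D+1$ points of $S$. Since $\phi_1 \equiv 1$ confines $S$ to the hyperplane $\{y_1 = 1\}$ of affine dimension $D-1$, the bound sharpens to $D$, and reading off the first coordinate shows the weights sum to one automatically, so the resulting decomposition is a genuine cubature formula rather than a mere conic one.

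The main obstacle is the final lifting step: recovering $x_j \in \Omega$ with $\bm\phi(x_j) = y_j$, since a priori $S$ sits only inside $\overline{\bm\phi(\Omega)}$ rather than $\bm\phi(\Omega)$ itself. The cleanest fix, used implicitly in the cited references \cite{bay06,EDT}, is to run the whole argument inside the moment cone generated by $\{\bm\phi(x) : x \in \Omega\}$ rather than inside $\cv(S)$; Carathéodory for convex cones then produces representations along extreme rays that are automatically of the form $\bm\phi(x)$, and a final refinement (replacing each chosen node by a nearby $\mu$-typical point, using that $S \cap \bm\phi(\Omega)$ has full $\mu$-measure) ensures the nodes lie in $\supp\mathrm{P}_{\bm\phi(X)}$ as required.
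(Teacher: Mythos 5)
Your main geometric line is sound and matches the route the paper itself sketches: the barycenter lies in $\cv\supp\mathrm{P}_{\bm\phi(X)}$ (Hahn--Banach for the closure, then the supporting-hyperplane/relative-boundary induction), followed by Carath\'eodory and the observation that $\phi_1\equiv1$ confines everything to the affine hyperplane $\{y_1=1\}$, which sharpens $D+1$ to $D$. Two small remarks on that part: your induction in fact establishes the stronger statement $\E{\bm\phi(X)}\in\ri\cv(S)$, i.e.\ exactly \eqref{MCCC:lem:exp-ri}; and the step "upgrade from the closure to $\cv(S)$ itself" silently uses the standard identity $\ri C=\ri\overline{C}$ for convex $C\subset\R^D$, which you should state since it is the entire content of that upgrade.

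The genuine gap is in your final lifting step. A cubature formula is an exact identity, so "replacing each chosen node by a nearby $\mu$-typical point" cannot work: any perturbation of the nodes destroys $\sum_j\lambda_j\bm\phi(x_j)=\E{\bm\phi(X)}$. Worse, the conic Carath\'eodory representation inside the moment cone of $\bm\phi(\Omega)$ may select points nowhere near the support at all (take $\Omega=\R$, $X$ uniform on $[0,1]$, $\bm\phi(x)=(1,x)^\top$: the mean $(1,1/2)$ admits a conic representation through $x=\pm 100$), so there need not be any "nearby" typical point to move to. The repair is already contained in your first step. Set $A:=S\cap\bm\phi(\Omega)$. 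Since $\bm\phi(X)$ takes values in $\bm\phi(\Omega)$ surely and in $S$ almost surely, $\overline{A}$ is a closed set of full measure, so $\overline{A}=S$ by minimality of the support. Consequently $\ri\cv(A)=\ri\overline{\cv(A)}=\ri\overline{\cv(S)}=\ri\cv(S)$, which contains $\E{\bm\phi(X)}$ by your own argument; now apply Carath\'eodory directly to the set $A$, whose elements are simultaneously of the form $\bm\phi(x_j)$ with $x_j\in\Omega$ and members of $\supp\mathrm{P}_{\bm\phi(X)}$, and the bound $n\le D$ follows as before. This relative-interior mechanism --- not the moment-cone detour --- is precisely the one the paper exploits later via Theorem \ref{MCCC:thm:ri} together with \eqref{MCCC:lem:exp-ri}.
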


In the above statement, $\supp\mathrm{P}_{\bm\phi(X)}$ is the support
of the distribution of the vector-valued random variable $\phi(X)$.
Equivalently, this is the smallest closed set $A\subset\R^D$ satisfying
$\mathrm{P}(\bm\phi(X)\in A) = 1$.
Generalized Tchakaloff's theorem can be understood as an immediate
consequence of a discrete-geometric argument.
Indeed, $\E{\bm\phi(X)}$ is contained in the convex hull of
$\supp\mathrm{P}_{\bm\phi(X)}$
(the convex hull of an $A\subset\R^D$ is defined as 
$\cv A :=\{\sum_{i=1}^m\lambda_i x_i \mid m\ge1,\ \lambda_i\ge0,
\ \sum_{i=1}^m\lambda_i=1,\ x_i\in A\}$).
Therefore, the generalized Tchakaloff's theorem follows 
Carath\'{e}odory's theorem
(note that we assume $\phi_1\equiv1$):
\begin{thm}[Carath\'{e}odory]
	For an arbitrary $A\subset\R^D$ and $x\in \cv A$,
	there exists $D+1$ points $x_1, \ldots, x_{D+1}\in A$
	such that $x\in \cv\{x_1,\ldots,x_{D+1}\}$.
\end{thm}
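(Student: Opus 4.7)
The plan is to prove Carathéodory's theorem by a minimality-plus-affine-dependence argument. Starting from any expression $x = \sum_{i=1}^m \lambda_i x_i$ with $x_i \in A$, $\lambda_i \geq 0$, and $\sum_i \lambda_i = 1$, choose one for which $m$ is as small as possible; the goal is to show $m \leq D+1$, after which one can pad with any point of $A$ (with zero weight) to reach exactly $D+1$ points if needed.

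Suppose for contradiction that $m \geq D+2$. Then the $m$ points $x_1,\ldots,x_m$ are too many to be affinely independent in $\R^D$, so there exist scalars $\mu_1,\ldots,\mu_m$, not all zero, satisfying $\sum_i \mu_i x_i = 0$ and $\sum_i \mu_i = 0$. Concretely, these arise by noting that $x_2 - x_1, \ldots, x_m - x_1$ are $m-1 \geq D+1$ vectors in $\R^D$ and hence linearly dependent, which gives coefficients $\mu_2,\ldots,\mu_m$; setting $\mu_1 := -\sum_{i \geq 2} \mu_i$ then yields the affine relation.

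For the reduction step, note that for any $t \in \R$ the perturbed coefficients $\lambda_i - t\mu_i$ still sum to one and still satisfy $x = \sum_i (\lambda_i - t\mu_i) x_i$. Since the $\mu_i$ sum to zero but are not all zero, at least one is strictly positive; set $t := \min\{\lambda_i / \mu_i : \mu_i > 0\}$. This $t$ is nonnegative, forces at least one new coefficient to vanish, and preserves nonnegativity of the others (for indices with $\mu_i \leq 0$ one has $\lambda_i - t\mu_i \geq \lambda_i \geq 0$ trivially, while for $\mu_i > 0$ the bound $t \leq \lambda_i/\mu_i$ does the job). Dropping the zeroed term yields a convex representation of $x$ with $m-1$ summands drawn from $A$, contradicting minimality of $m$.

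No serious obstacle is expected; the proof is elementary linear algebra combined with the minimality principle. The one subtlety worth flagging is the simultaneous role of $t$: it must be well-defined, which is guaranteed by the existence of some $\mu_i > 0$, and it must preserve nonnegativity across all coordinates, which falls out cleanly from the case split on the sign of $\mu_i$.
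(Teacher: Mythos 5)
Your proof is correct and is the standard argument for Carath\'{e}odory's theorem: take a representation $x=\sum_{i=1}^m\lambda_i x_i$ with $m$ minimal, derive an affine dependence $\sum_i\mu_i x_i=0$, $\sum_i\mu_i=0$ when $m\ge D+2$, and perturb by $t=\min\{\lambda_i/\mu_i:\mu_i>0\}$ to zero out a coefficient while preserving nonnegativity. The paper itself gives no proof of this statement --- it is quoted as a classical result used to deduce the generalized Tchakaloff theorem --- so there is nothing to compare against; your argument is complete, including the well-definedness of $t$ (some $\mu_i>0$ since the $\mu_i$ sum to zero and are not all zero) and the padding to exactly $D+1$ points.
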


Although the above argument cannot directly
be used in the construction of cubature,
we can use its nature by introducing the concept of relative interior.
For a set $A\subset\R^D$, its affine hull is defined by
\[
\aff A:=\left\{
\sum_{i=1}^m \lambda_ix_i \mid m\ge1,\ \lambda_i\in\R,\
\sum_{i=1}^m \lambda_i=1,\ x_i\in A
\right\}
\]
Then, the relative interior of $A$ is the interior of $A$
regarding the subspace topology on $\aff A$
and denoted by $\ri A$.
In terms of this relative interior, the following generalization of
Carath\'{e}odory's theorem holds:
\begin{thm}[\cite{ste16,bon63}]\label{MCCC:thm:ri}
	Suppose an $A\subset\R^D$
	satisfies $\aff A$ is a $k$-dimensional affine subspace of $\R^D$.
	Then, for each $x\in\ri\cv A$,
	there exists some subset $B\subset A$ composed of at most $2k$ points
	such that
	$\aff B=\aff A$ and $x\in\ri\cv B$.
\end{thm}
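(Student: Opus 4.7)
The plan is to reduce Theorem \ref{MCCC:thm:ri} to a classical fact about positive spanning sets in Euclidean space. By an affine identification, I would first assume $\aff A = \R^k$, so that $\ri \cv A$ coincides with the topological interior $\intr \cv A$ in $\R^k$; in particular there is an open ball $B_\ve(x) \subset \cv A$. For any subset $B \subset \R^k$, a short separating-hyperplane argument yields the reformulation
\[
	x \in \intr \cv B \iff \{b - x : b \in B\} \text{ positively spans } \R^k,
\]
where a set of vectors \emph{positively spans} $\R^k$ when every vector of $\R^k$ is a nonnegative linear combination of its elements, equivalently when no nonzero linear functional is nonnegative on all of them.

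Applied to $A$, the hypothesis $x \in \intr \cv A$ is equivalent to $\{a - x : a \in A\}$ positively spanning $\R^k$. The theorem then follows from the classical fact that any positive spanning set of $\R^k$ contains a positive spanning subset of cardinality at most $2k$: one iteratively removes vectors whose omission preserves positive spanning to arrive at a \emph{minimal} positive spanning set (a ``positive basis''), whose cardinality is known to satisfy $k + 1 \le m \le 2k$. Translating back, the corresponding $B \subset A$ satisfies $|B| \le 2k$, $x \in \ri \cv B$, and $\aff B = \R^k = \aff A$, the last holding because positive spanning of $\{b - x : b\in B\}$ implies linear spanning of this set, hence $\aff B = \R^k$.

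The principal obstacle is the sharp upper bound $2k$ on the size of a positive basis of $\R^k$, which does not follow from Carathéodory's theorem alone. Its proof attaches to each $v_i$ in a minimal positive basis $\{v_1, \ldots, v_m\}$ a separating functional $u_i$ with $\langle u_i, v_i \rangle < 0$ and $\langle u_i, v_j \rangle \ge 0$ for $j \ne i$, and then extracts $m \le 2k$ from the combinatorial sign pattern of the matrix $\bigl(\langle u_i, v_j \rangle\bigr)_{i, j}$ together with the linear dependencies among the $v_i$. Rather than reproducing this argument, I would simply cite it from \cite{ste16,bon63}.
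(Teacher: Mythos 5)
The paper does not actually prove Theorem \ref{MCCC:thm:ri}: it is imported as a known result from \cite{ste16,bon63} and used as a black box, so there is no internal proof to compare yours against. On its own terms your reduction is correct and is the standard route to this statement. After the affine normalization $\aff A = \R^k$, the equivalence between $x \in \intr\cv B$ and positive spanning of $\{b - x : b \in B\}$ is sound (for the direction ``positively spans $\Rightarrow$ interior point'' with $B$ possibly infinite, one should first extract a finite positively spanning subfamily, e.g.\ finitely many elements sufficing to positively generate $\pm e_1, \ldots, \pm e_k$, and then run the separation argument on that finite set); the passage from a finite positive spanning set to a minimal one by iterative deletion stays inside the original family, so the resulting $B$ is indeed a subset of $A$; and $\aff B = \R^k$ follows as you say, since $x \in \cv B \subset \aff B$ and $\{b - x : b \in B\}$ linearly spans $\R^k$. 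The one genuinely nontrivial ingredient --- that a minimal positive spanning set (positive basis) of $\R^k$ has at most $2k$ elements, which is Davis's bound and is exactly what lies behind the Bonnice--Klee type refinement of Carath\'eodory's theorem --- you defer to the same references the paper cites, so your argument, like the paper's, ultimately rests on the cited literature. That is a legitimate choice, but be aware that this bound is the entire content of the theorem beyond ordinary Carath\'eodory, so a self-contained proof would have to supply the sign-pattern argument you only sketch in your final paragraph.
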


From this generalization and the fact that
\begin{equation}
	\E{\bm\phi(X)}\in\ri\cv\supp\mathrm{P}_{\bm\phi(X)}
	\label{MCCC:lem:exp-ri}
\end{equation}
holds
(see, e.g., \cite{hayakawa-MCCC} or essentially \cite{bay06} for proof),
we obtain the following randomized construction
of cubature formulas from i.i.d. copies of $X$:
\begin{thm}[\cite{hayakawa-MCCC}]\label{MCCC:thm:main}
	Let $X_1, X_2, \ldots$ be i.i.d. copies of $X$.
	Then there exists almost surely a positive integer $n$ satisfying
	$\E{\bm\phi(X)} \in \cv \{\bm\phi(X_1), \ldots, \bm\phi(X_n)\}$.
\end{thm}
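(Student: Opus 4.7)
The plan is to combine Theorem \ref{MCCC:thm:ri}, the fact \eqref{MCCC:lem:exp-ri}, a barycentric perturbation lemma, and the second Borel--Cantelli lemma. Set $S := \supp \mathrm{P}_{\bm\phi(X)}$, $k := \dim \aff S$, and $m := \E{\bm\phi(X)}$. (When $k = 0$, $\bm\phi(X)$ is a.s.\ constant and $n=1$ suffices, so assume $k \geq 1$.) By \eqref{MCCC:lem:exp-ri}, $m \in \ri \cv S$, so applying Theorem \ref{MCCC:thm:ri} with $A = S$ produces points $y_1, \ldots, y_{2k} \in S$ with $\aff\{y_1, \ldots, y_{2k}\} = \aff S$ and $m \in \ri \cv\{y_1, \ldots, y_{2k}\}$. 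In particular $m$ admits a strictly positive representation $m = \sum_i \mu_i y_i$ with $\mu_i > 0$ and $\sum_i \mu_i = 1$.

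The core step is a perturbation lemma: there exists $\delta > 0$ such that whenever $z_1, \ldots, z_{2k} \in \aff S$ satisfy $\|z_i - y_i\| < \delta$, one still has $m \in \cv\{z_1, \ldots, z_{2k}\}$. I would prove it by perturbing the barycentric coefficients: writing $\lambda_i = \mu_i + \nu_i$ and imposing $\sum_i \nu_i = 0$ together with $\sum_i \lambda_i z_i = m$ reduces to solving the linear equation $\sum_i \nu_i y_i = -\sum_i (\mu_i + \nu_i)(z_i - y_i)$ in the tangent direction of $\aff S$. Because $\{y_1, \ldots, y_{2k}\}$ affinely spans $\aff S$, the map $\nu \mapsto \sum_i \nu_i y_i$ restricted to $\{\sum_i \nu_i = 0\}$ is surjective onto this tangent space, and a fixed right-inverse produces a solution with $\|\nu\|_\infty = O(\delta)$; for $\delta$ small enough, all $\lambda_i > 0$.

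With the lemma in hand, the rest is routine: each $y_i$ lies in $\supp \mathrm{P}_{\bm\phi(X)}$, so $p_i := \mathrm{P}(\|\bm\phi(X) - y_i\| < \delta) > 0$. Applying the second Borel--Cantelli lemma to the independent events $\{\|\bm\phi(X_j) - y_i\| < \delta\}_{j \geq 1}$ for each of the finitely many $i$, almost surely infinitely many samples fall in every $\delta$-ball around $y_i$; we also have $\bm\phi(X_j) \in S \subset \aff S$ almost surely. Choosing $n$ large enough to hit all $2k$ balls, the perturbation lemma yields $m \in \cv\{\bm\phi(X_1), \ldots, \bm\phi(X_n)\}$. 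The main obstacle is the perturbation lemma itself: a naive Hausdorff-distance bound between $\cv\{y_i\}$ and $\cv\{z_i\}$ does not suffice, since the relative-interior radius of the perturbed hull might in principle collapse. The explicit coefficient-perturbation argument above sidesteps this by producing positive weights directly from surjectivity of the barycentric map.
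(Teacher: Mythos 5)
Your proof is correct and follows essentially the same route the paper indicates for this cited result: combine Theorem \ref{MCCC:thm:ri} with \eqref{MCCC:lem:exp-ri} to obtain $2k$ support points whose convex hull contains $\E{\bm\phi(X)}$ in its relative interior, then use the positive probability of samples landing in small neighborhoods of those points together with stability of the containment under perturbations within $\aff\supp\mathrm{P}_{\bm\phi(X)}$ --- the very argument the paper reuses in the proof of Proposition \ref{prop:pw-generate}. Your explicit barycentric-coefficient perturbation lemma is a sound way to make the stability step rigorous.
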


Though the weights remain undetermined, for a sufficiently large $n$,
it suffices to take a basic feasible solution of the linear programming problem
\[
\text{minimize}\ \  0 \qquad \text{subject to}\ \ 
\sum_{j=1}^n\lambda_j \bm\phi(X_j) = \E{\bm\phi(X)},\ \lambda_j\ge0.
\]
Indeed, its basic feasible solution satisfies the bound of points used
in a cubature given in Tchakaloff's theorem
(Theorem \ref{MCCC:thm:gen-tchakaloff}).
This sort of technique reducing the number of points in a discrete measure
is called Carath\'{e}odory-Tchakaloff subsampling \cite{pia17}.

Here, if we formally write the iterated integral
$\int_{0<t_1<\cdots<t_k<T}\ds w^{i_1}(t_1)
\cdots \ds w^{i_k}(t_k)$  appearing
in \eqref{intro-cubature}
as $\phi_{(i_1, \ldots,i_k)}(w)$ for a valid $w$
(and the Brownian motion $B$),
then the cubature on Wiener space is a set of paths $w_j$
and weights $\lambda_j$ formally satisfying
\begin{equation}
	\E{\bm\phi_\mathrm{W}(B)}
	= \sum_{j=1}^n \lambda_j \bm\phi_\mathrm{W}(w_j),
	\label{eq:CoW}
\end{equation}
where $\bm\phi_\mathrm{W}$ denotes a vector of some functions
of the form $\phi_{(i_1,\ldots,i_k)}$.
Therefore, if we could directly generate sample paths of
the Brownian motion, then Theorem \ref{MCCC:thm:main} should be applicable.
In reality, it is impossible to generate a Brownian motion on a computer,
and it is not even a BV path.
However, we assume the following variant,
supporting our arguments.

\begin{rem}\label{rem-assumption}
	The assumption that $X_1, X_2, \ldots$
	possess the same distribution as $X$
	in Theorem \ref{MCCC:thm:main} can be relaxed;
	the same conclusion yields from the following condition
	for the i.i.d. sequence:
	\begin{equation}
		\aff\supp\mathrm{P}_{\bm\phi(X_1)}=\aff\supp\mathrm{P}_{\bm\phi(X)},
		\qquad
		\supp\mathrm{P}_{\bm\phi(X_1)} \supset \supp\mathrm{P}_{\bm\phi(X)}.
		\label{reduced-condition}
	\end{equation}
	From this fact, it is sufficient to investigate the distribution
	of iterated integrals,
	and we indeed show the Wiener space counterpart of
	the condition \eqref{reduced-condition} in
	Proposition \ref{BM-full}.
\end{rem}


\section{Theoretical background of cubature on Wiener space}\label{chap:CoW}

In this section, we provide a theoretical
review on the cubature theory on Wiener space,
first introduced by \cite{lyo04}.
We quickly introduce basic notions concerning multidimensional stochastic flows,
and give the error estimate of cubature formula.
Moreover,
we will demonstrate few examples of concrete construction of cubature formula
on Wiener space in Section \ref{sec:const}.

\subsection{Vector fields}\label{sec:vector-fields}
In this section, we define vector fields on $\R^N$ and show the correspondence
between vector fields and vector-valued functions.
Let $C^\infty(\R^N)$ be the set of real-valued smooth functions over $\R^N$.

\begin{dfn}
	A {\it vector field} on $\R^N$ is a ($\R$-)linear mapping
	$V:C^\infty(\R^N)\to C^\infty(\R^N)$ such that
	$V(fg)=(Vf)g + fVg$ holds for arbitrary $f, g\in C^\infty(\R^N)$.
\end{dfn}

Due to this condition, a vector field on $\R^N$ has to be
a differential operator
$\sum_{i=1}^NV^i\partial_i$ where $V^i\in C^\infty(\R^N)$ and
$\partial_i$ denotes the $i$-th partial derivative
for $i=1,\ldots,d$.
Therefore, a vector field corresponds to
the vector-valued smooth function
$(V^1, \ldots, V^N)^\top : \R^N \to \R^N$.
By abuse of notation, we also denote this vector-valued function
by $V$.

If $A$ and $B$ are vector fields on $\R^N$,
we define the {\it Lie bracket} $[A, B]:=AB-BA$.
This $[A, B]$ is also a vector field
because the second derivatives vanish.
Note that $[A, B]$ corresponds to the vector
$(\partial B)A -(\partial A)B$, where $A$, $B$ are regarded as functions
and $\partial C$ denotes the Jacobian matrix of $C$ (see, e.g., \cite{hai11}).

Reciprocally, the coefficients of the SDE (\ref{sde})
can be regarded as vector fields.
These vector fields are closely related to the behavior of $X_t$.

Let $V_0, \ldots, V_d$ be the vector fields (operators) induced by
the coefficients of \eqref{sde}, and define the operator
$L:=V_0+\frac12(V_1^2+\cdots+V_d^2)$.
Let us consider the parabolic partial differential equation (PDE)
\begin{equation}
\begin{cases}
\displaystyle\frac{\partial}{\partial t}u(t, x) = Lu(t, x),\\
\hspace{3.75mm} u(0, x) = f(x),
\end{cases}
\label{pde}
\end{equation}
with a Lipschitz function $f:\R^N\to\R^N$.
Because  $u(T, x)=\E{f(X_T(x))}$ holds \cite{ike89},
we can exploit the numerical schemes in PDE theory to get $\E{f(X_T(x))}$
and vice versa.


We also introduce several conditions
on the vector fields.
They are assumed to obtain
the estimate given in the Proposition \ref{prop:estimate}.
Before we state those, we introduce some notations
based on \cite{kus01}.

Let $\A:=\{\emptyset\}\cup\bigcup_{k=1}^\infty\{0, 1,\ldots,d\}^k$.
For $\alpha\in\A$, define $|\alpha|:=0$ if $\alpha=\emptyset$ and
$|\alpha|:=k$ if $\alpha=(\alpha_1,\ldots,\alpha_k)\in\{0,\ldots,d\}^k$.
We also define $\|\alpha\|:=|\alpha|+|\{1\le j\le |\alpha|
\mid \alpha_j=0\}|$.
For $\alpha,\beta\in\A$,
define $\alpha*\beta:=
(\alpha_1,\ldots,\alpha_{|\alpha|},\beta_1,\ldots,\beta_{|\beta|})$.
Let $\A_0:=\A\setminus\{\emptyset\}$ and
$\A_1:=\A\setminus\{\emptyset\cup (0)\}$.
We also define, for each integer $m\ge1$,
\[
\A(m):=\{\alpha\in\A\mid \|\alpha\|\le m\},
\quad \A_0(m):=\A(m)\cap\A_0,
\quad \A_1(m):=\A(m)\cap\A_1.
\]

Define a vector field $V_{[\alpha]}$ for each $\alpha\in\A$
inductively by
$V_{[\emptyset]}=0$ and
\begin{align*}
	&V_{[i]}(=V_{[(i)]}) := V_i\quad (i=0,\ldots,d),\\
	&V_{[\alpha*(i)]} := [V_\alpha, V_i] \quad
	(|\alpha|\ge1, i=0,\ldots,d).
\end{align*}

We can now state the uniformly finitely generated (UFG) condition
\cite{kus87,kus01}:
\begin{quote}
	(UFG) There exists a positive integer $L\ge1$ such that,
	for an arbitrary $\alpha\in\A_1$, there exists
	$\phi_{\alpha,\beta}\in C_b^\infty(\R^N)$ for each $\beta\in \A_1(L)$
	satisfying
	\[
	V_{[\alpha]}=\sum_{\beta\in\A_1(L)}\phi_{\alpha,\beta}V_{[\beta]}.
	\]
\end{quote}
This is equivalent to the statement that the $C_b^\infty(\R^N)$-module
generated by $\{V_{[\alpha]}\mid \alpha\in\A_1\}$ is finitely generated.
Note that (UFG) is known to be strictly weaker than the uniform
H\"{o}rmander condition (see, e.g., Example 2 in \cite{kus03}),
which is one of the typical assumptions on vector fields.

Although only the condition (UFG) was assumed in \cite{lyo04},
it was pointed out in \cite{cri07} that the following condition
is also essential:
\begin{quote}
	(V0) There exists $\phi_\beta\in C_b^\infty(\R^N)$
	for each $\beta\in\A_1(2)$ such that
	\[
	V_0=\sum_{\beta\in\A_1(2)} \phi_\beta V_{[\beta]}.
	\]
\end{quote}

For a function $f\in C_b^\infty(\R^N)$, define $(P_tf)(x):=\E{f(X_t(x))}$.
The following estimate is essential.
\begin{prop}[\cite{kus87,cri07}]\label{prop:estimate}
	Assume that both (UFG) and (V0) hold.
	Then, for any positive integer $r$ and $\alpha_1,\ldots, \alpha_r\in \A$,
	there exists a constant $C>0$ such that
	\begin{equation}
	\|V_{[\alpha_1]}\cdots V_{[\alpha_r]}P_tf\|_\infty
	\le \frac{Ct^{1/2}}{t^{(\|\alpha_1\|+\cdots+\|\alpha_r\|)/2}}
	\|\nabla f\|_\infty
	\label{eq:estimate}
	\end{equation}
\end{prop}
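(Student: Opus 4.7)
The plan is to follow the Malliavin-calculus framework of Kusuoka and Stroock. The overall target is an integration-by-parts representation
\[
V_{[\alpha_1]}\cdots V_{[\alpha_r]}P_tf(x) = \E{\nabla f(X_t(x))\cdot\Psi_{\alpha_1,\ldots,\alpha_r}(t,x)},
\]
where $\Psi$ is a Malliavin weight whose $L^p$-moments decay like $t^{(1-\sum_j\|\alpha_j\|)/2}$ uniformly in $x$. Once such a representation is in hand, the estimate follows by H\"{o}lder together with $\|\nabla f(X_t(x))\|_\infty\le\|\nabla f\|_\infty$.

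The heart of the argument is the case $r=1$. Here I would use the first-variation process of the flow to express $V_{[\alpha]}P_tf$ in terms of $\nabla f(X_t)$ composed with stochastic derivatives of $X_t$, and then apply a Bismut-type Skorohod integration by parts to convert those stochastic derivatives into Malliavin weights on the driving Brownian motion. The time scaling $t^{(1-\|\alpha\|)/2}$ emerges naturally from the stochastic Taylor expansion of $V_{[\alpha]}$ applied to $X_t$: each nonzero index in the resulting iterated Stratonovich integral contributes a factor $t^{1/2}$, while each occurrence of the drift index $0$ contributes $t$, together producing $t^{\|\alpha\|/2}$ in the denominator. Condition (UFG) is precisely what guarantees the requisite non-degeneracy of the Malliavin covariance matrix restricted to the module generated by $\{V_{[\beta]}:\beta\in\A_1(L)\}$, so that its inverse has $L^p$-moments of every order uniformly in $x$. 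Condition (V0) is complementary: by decomposing $V_0=\sum_{\beta\in\A_1(2)}\phi_\beta V_{[\beta]}$, it prevents the drift from introducing an additional pole in $t$, which would otherwise break the scaling for multi-indices containing $0$.

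For $r\ge 2$, I would proceed by induction using the semigroup decomposition $P_t=P_{t/2}\circ P_{t/2}$, distributing the $r$ derivatives across the two half-intervals so that some of them act as higher-order derivatives on the smoother test function $P_{t/2}f$ while the remaining ones are converted into Malliavin weights on the outer slice via the base-case integration-by-parts. The exponent $(1-\sum_j\|\alpha_j\|)/2$ arises as the total accounting of these contributions, with a single $t^{1/2}$ surviving on the right thanks to the one overall $\nabla f$. The main obstacle is entirely in the base case: establishing the sharp $L^p$-bound on $\Psi$ demands the full Kusuoka non-degeneracy analysis, an intricate recursion over Lie-bracket length paired with quantitative control of the inverse Malliavin matrix under (UFG) and (V0). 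In the present context it seems sensible to import this machinery directly from \cite{kus87,kus01,cri07} rather than attempt to reprove it.
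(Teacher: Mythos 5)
The paper does not actually prove this proposition: it is stated as a known result and imported wholesale from \cite{kus87,cri07}, so there is no in-paper argument to compare against. Your sketch matches the standard Kusuoka--Stroock/Crisan--Ghazali strategy---integration by parts on Wiener space with a covariance made non-degenerate by (UFG) (noting that under (UFG) alone the \emph{full} Malliavin matrix may be degenerate, so one must work with the reduced covariance over the module generated by $\{V_{[\beta]}:\beta\in\A_1(L)\}$, as you indicate) and (V0) supplying the extra $t^{1/2}$ gain over the plain $\|f\|_\infty$ bound---and since you explicitly defer the quantitative core to those same references, your proposal is in substance exactly what the paper does.
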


Although we can obtain a weaker bound without assuming (V0),
we later exploit this bound assuming both (UFG) and (V0) for simplicity.


We finally state the stochastic Taylor formula in terms
of the vector-field notation introduced above.
By It\^{o}'s formula, we obtain for any $f\in C_b^\infty(\R^N)$
\begin{align*}
	f(X_t)&= f(x)+ \sum_{i=0}^d \sum_{j=1}^N \int_0^t V_i^j(X_s)
	\partial_jf(X_s) \cd B^i_s\\
	&=f(x)+\sum_{i=0}^d\int_0^t (V_if)(X_s)\cd B^i_s,
\end{align*}
where we denote $\ds s$ by $\circ\,\ds B^0_s$.
Therefore, the repetition of Ito's formula yields
\[
f(X_t)=f(x)+\sum_{i=0}^d (V_if)(x) \int_{0<s<t} \ccd B_s^i
+ \sum_{i,j=0}^d (V_iV_jf)(x) \int_{0<t_1<t_2<1}
\ccd B^i_{t_1}\cd B^j_{t_2}
\cdots.
\]
This is the stochastic Taylor formula,
which is rigorously stated as follows.
For a multiindex
$\alpha=(\alpha^1,\ldots,\alpha^k)\in\A$,
we denote by $V_\alpha$ the operator $V_{\alpha^1}\cdots V_{\alpha^k}$.
\begin{prop}[{\cite[Proposition 2.1]{lyo04}}; 
	\cite{klo92}]\label{prop:st-taylor}
	Let $f\in C_b^\infty(\R^N)$ and $m$ be a positive integer.
	Then, we have
	\[
	f(X_t(x))=\sum_{\alpha\in\A(m)}
	(V_\alpha f)(x)
	+R_m(t, x, f),
	\]
	where the remainder term satisfies, for some constant $C>0$,
	\[
	\sup_{x\in \R^N}\sqrt{\E{R_m(t, x, f)^2}}
	\le C t^{\frac{m+1}2}\sup_{\beta\in\A(m+2)\setminus\A(m)}
	\|V_\beta f\|_\infty.
	\]
\end{prop}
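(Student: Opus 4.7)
The natural approach is to iterate the Stratonovich chain rule. The one-step expansion $f(X_t(x)) = f(x) + \sum_{i=0}^d \int_0^t (V_i f)(X_s) \cd B^i_s$ is already displayed in the excerpt; applying the same identity to each integrand $(V_i f)(X_s)$ yields deeper iterated integrals whose integrands are $(V_j V_i f)(X_u)$, and so on. I would organize the recursion as a tree of multiindices $\alpha \in \A$: a node at $\alpha$ is expanded once more whenever $\|\alpha\| \le m$, and left frozen otherwise. The fully expanded leaves aggregate into the main sum $\sum_{\alpha \in \A(m)} (V_\alpha f)(x)\, I^\alpha(t)$, where $I^\alpha(t)$ denotes the iterated Stratonovich integral of $B$ of type $\alpha$, and each frozen leaf has the form
\[
\int_{0<t_1<\cdots<t_k<t} (V_\beta f)(X_{t_1}) \cd B^{\beta_1}_{t_1} \cdots \cd B^{\beta_k}_{t_k}
\]
for some $\beta$ obtained by appending one index to an $\alpha \in \A(m)$. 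Because appending a nonzero index raises $\|\cdot\|$ by one and appending a zero raises it by two, each such $\beta$ lies in $\A(m+2) \setminus \A(m)$, and the sum of all frozen leaves is exactly $R_m(t,x,f)$.

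The estimate then reduces to the following $L^2$-bound for a single iterated Stratonovich integral with a smooth bounded adapted integrand $g$:
\[
\sqrt{\E{\left(\int_{0<t_1<\cdots<t_k<t} g(X_{t_1}) \cd B^{\beta_1}_{t_1} \cdots \cd B^{\beta_k}_{t_k}\right)^2}} \le C\, t^{\|\beta\|/2} \|g\|_\infty,
\]
with $C$ depending only on $k$, $d$, and finitely many derivative bounds of $g$ and of the $V_i$. The weight $\|\beta\|$ is calibrated precisely for this: a Lebesgue factor $\dd s$ contributes $t$ in $L^2$ and counts twice in $\|\cdot\|$, whereas an It\^o factor $\dd B^i_s$ ($i\ge 1$) contributes $t^{1/2}$ via the Burkholder--Davis--Gundy (BDG) inequality and counts once, so that $p$ zero indices and $q$ nonzero indices give $t^{p+q/2}=t^{\|\beta\|/2}$. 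Applying this with $g = V_\beta f$, using $\|\beta\| \ge m+1$, and summing over the finitely many frozen leaves yields the claimed bound.

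The main obstacle is proving the $L^2$-bound just stated, since Stratonovich integrals are not martingales and BDG does not apply directly. My plan is to rewrite each Stratonovich factor $\cd B^i$ ($i \ge 1$) in the nested integral as $\dd B^i + \tfrac12 \dd\langle B^i, \cdot\rangle$, proceeding from the innermost integral outward; this produces a finite sum of nested It\^o--Lebesgue iterated integrals, and one checks that the Stratonovich-to-It\^o correction preserves the total weight $\|\cdot\|$ since converting one of the $\cd B^i$ factors into a correction collapses an outer nonzero index into an extra Lebesgue factor (trading a unit of $\|\cdot\|$ coming from the vanished index for the extra unit carried by the new zero). Given weight preservation, BDG applied to the martingale layers together with H\"older's inequality for the Lebesgue layers yields the per-term $t^{\|\beta\|/2}$ scaling, and bounded derivatives of $g$ absorb the operators appearing in the correction terms. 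The cleanest presentation is to isolate this $L^2$-bound as a standalone lemma, as done in Kloeden--Platen, and invoke it here.
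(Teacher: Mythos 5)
The paper offers no proof of this proposition --- it is quoted from Lyons--Victoir and Kloeden--Platen, with only the informal iteration of It\^o's formula preceding the statement --- and your outline (expand while $\|\alpha\|\le m$, freeze otherwise, note that one-step extensions of $\A(m)$ land in $\A(m+2)\setminus\A(m)$, then bound each frozen leaf in $L^2$ by $t^{\|\beta\|/2}$ per layer counting) is exactly the standard argument behind the citation; you also correctly restored the iterated-integral factor $I^\alpha$ that is missing from the paper's displayed formula. The one point to tighten is your standalone $L^2$-lemma: if its constant is allowed to depend on ``derivative bounds of $g$'', then applying it with $g=V_\beta f$ a priori introduces norms $\|V_\gamma f\|_\infty$ with $\gamma$ outside $\A(m+2)$, which is formally weaker than the stated bound. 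This is repaired by observing that the only Stratonovich-to-It\^o correction that differentiates the integrand is the innermost one, producing $\tfrac12(V_{\beta_1}V_\beta f)\,\dd t_1$ with $\|(\beta_1)*\beta\|=\|\beta\|+1$, and that a frozen leaf with $\|\beta\|=m+2$ necessarily starts with $\beta_1=0$ (so no such correction arises), while $\|\beta\|=m+1$ gives $\|(\beta_1)*\beta\|=m+2$; the outer-layer corrections, as you note, merely replace two equal nonzero indices by a zero, preserving $\|\cdot\|$ and introducing no further derivatives of $f$.
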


\subsection{Formulation and evaluation of cubature on Wiener space}
\label{sec:formulation}
We can now precisely define the cubature formula
\cite{lyo04}.
\begin{dfn}
	Let $T>0$, and let $m$ be a positive integer.
	BV paths $w_1,\ldots,w_n\in C_0^0([0, T]; \R\oplus\R^d)$
	and weights $\lambda_1,\ldots,\lambda_n\ge0$
	with $\sum_{i=1}^n\lambda_j=1$ to define
	a cubature formula on Wiener space of degree $m$ at time $T$,
	if only
	\[
	\E{\int_{0<t_1<\cdots<t_k<T}
		\ccd B^{i_1}_{t_1}\cdots \cd B^{i_k}_{t_k}}
	=\sum_{j=1}^n \lambda_j \int_{0<t_1<\cdots<t_k<T}
	\ds w_{j}^{i_1}(t_1)\cdots \ds w_{j}^{i_k}(t_k).
	\]
	holds for all $(i_1,\ldots,i_k)\in\A(m)$.
	Here, $\ccd B^0_s$ represents $\ds s$.
\end{dfn}

To construct a cubature formula, it suffices to find it over $[0, 1]$.
Indeed, when $w_1,\ldots,w_n\in C_0^0([0, 1]; \R\oplus\R^d)$
form the cubature over $[0, 1]$,
\[
w_{T, j}^i(t):=\begin{cases}
Tw_j^i(t/T) & (i=0)\\
\sqrt{T}w_j^i(t/T) & (i=1,\ldots,d)
\end{cases}
\]
with the same weights define
the cubature over $[0, T]$.
This is an immediate consequence of the scaling property of
the Brownian motion.

Once such paths are given, we can easily compute each evolution driven by
$w_i$ as it is just an ODE.
For a BV path $w\in C_0^0([0, T]; \R\oplus\R^d)$,
define $\tilde{X}_{t}(x, w)$ as the solution of the ODE
\[
\ds \tilde{X}_t(x, w)=\sum_{i=0}^d V_i(\tilde{X}_t(x, w))\dd w^i (t),
\quad
\tilde{X}_0(x, w)=x.
\]
Then, $\sum_{j=1}^n\lambda_jf(\tilde{X}_T(x, w_{T,j}))$ should approximate well
$\E{f(X_T(x))}$.
Indeed, the estimate in Proposition \ref{prop:st-taylor} holds for $t=T$
if we replace the Wiener measure by the discrete measure
$\sum_{j=1}^n \lambda_j \delta_{w_j}$.
Therefore, by applying Cauchy-Schwarz we obtain the evaluation:
\begin{equation}
\sup_{x\in\R^N}\left\lvert
\E{f(X_T(x))}-
\sum_{j=1}^n\lambda_j f(\tilde{X}_T(x, w_{T,j}))\right\rvert
\le CT^{\frac{m+1}2}\sup_{\beta\in\A(m+2)\setminus\A(m)}
\|V_\beta f\|_\infty
\label{est1}
\end{equation}
with the constant $C>0$ depending only on $w_1,\ldots,w_n$.

The above formula does not work
as a good approximation unless $T$ is small.
Therefore, we divide $[0, T]$ into smaller time intervals as
$0=t_0<t_1<\cdots<t_k=T$.
If we consider the repeated application of the cubature formula
over each subinterval $[t_{\ell-1}, t_\ell]$, we can use
\begin{equation}
\sum_{j_1,\ldots,j_k=1}^n
\lambda_{j_1}\cdots\lambda_{j_k}
f(X_T(x, w_{s_1, j_1}*\cdots*w_{s_k, j_k})),
\label{eq:rep-cubature}
\end{equation}
where $w*v$ denotes the concatenation of two paths
and $s_\ell:=t_\ell-t_{\ell-1}$ for each $\ell=1,\ldots k$,
as an approximation of the expectation $\E{f(X_T(x))}$.
If we define discrete
Markov random variables $Y_0,\ldots,Y_k$
independent of the Brownian motion
as
\[
Y_0=x, \qquad \mathrm{P}(Y_\ell=\tilde{X}_{s_\ell, y}
(w_{s_\ell, j}) \mid Y_{\ell-1}=y)
=\lambda_j
\quad (\ell=1,\ldots,k,\ j=1,\ldots, n),
\]
$\E{Y_k}$ coincides with the approximation \eqref{eq:rep-cubature}.
Then, combining an estimate for
\[
\sup_{x\in\R^N}\left\lvert
\E{f(Y_k)\mid Y_0=x}-\E{f(X_T(x))}
\right\rvert
\]
with Proposition \ref{prop:estimate},
we can prove the following assertion.

\begin{prop}[{\cite[Proposition 3.6]{lyo04}}]\label{prop:comb-cub}
	Let $f$ be a bounded Lipschitz function in $\R^N$.
	Then, under (UFG) and (V0), we have
	\[
	\sup_{x\in\R^N}\left\lvert
	\E{f(Y_k)\mid Y_0=x}-\E{f(X_T(x))}
	\right\rvert
	\le C\|\nabla f\|_\infty\left(s_k^{1/2}+\sum_{\ell=1}^{k-1}
	\frac{s_\ell^{(m+1)/2}}{(T-t_\ell)^{m/2}}\right)
	\]
	for some constant $C>0$,
	which is dependent only on $m$ and $w_1,\ldots,w_n$.
\end{prop}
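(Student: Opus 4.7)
The plan is to represent the total error as a telescoping sum of one-step errors along the partition $0 = t_0 < t_1 < \cdots < t_k = T$, bound the interior one-step errors through Proposition \ref{prop:st-taylor} combined with the regularity estimate of Proposition \ref{prop:estimate}, and treat the last one-step error separately using only the Lipschitz property of $f$. Let $P_s f(x) := \E{f(X_s(x))}$ denote the Markov semigroup of the SDE, and let $Q^{(s)} f(x) := \sum_{j=1}^n \lambda_j f(\tilde X_s(x, w_{s,j}))$ denote the one-step cubature operator of step size $s$. Both operators are $L^\infty$-contractions since $\lambda_j \ge 0$ and $\sum_j \lambda_j = 1$. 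By the Markov property of $(Y_\ell)$ and the flow property of $X$,
\[
\E{f(Y_k)\mid Y_0 = x} = (Q^{(s_1)}\cdots Q^{(s_k)} f)(x), \qquad \E{f(X_T(x))} = (P_{s_1}\cdots P_{s_k} f)(x).
\]
Telescoping the difference and applying contractivity of the $Q^{(s_j)}$'s yields
\[
\|P_{s_1}\cdots P_{s_k} f - Q^{(s_1)}\cdots Q^{(s_k)} f\|_\infty \le \sum_{\ell=1}^k \|(P_{s_\ell} - Q^{(s_\ell)}) g_\ell\|_\infty,
\]
where $g_\ell := P_{T-t_\ell} f$ (in particular $g_k = f$).

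For each interior step $\ell < k$, I would apply Proposition \ref{prop:st-taylor} to $g_\ell$ with step size $s_\ell$. Because the cubature formula matches every iterated Stratonovich integral indexed by $\A(m)$, the leading terms in the Taylor expansion cancel in $P_{s_\ell} g_\ell$ and $Q^{(s_\ell)} g_\ell$, and both remainders are controlled by $Cs_\ell^{(m+1)/2}\sup_{\beta \in \A(m+2)\setminus\A(m)}\|V_\beta g_\ell\|_\infty$ with a constant depending only on $w_1, \ldots, w_n$. The main obstacle is to convert $\|V_\beta P_{T-t_\ell} f\|_\infty$, where $V_\beta = V_{\beta^1}\cdots V_{\beta^{|\beta|}}$ is a raw product of coordinate operators, into a bound in terms of $\|\nabla f\|_\infty$ using Proposition \ref{prop:estimate}, which is stated only for products of Lie brackets $V_{[\alpha_j]}$. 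I would resolve this by invoking (V0) to rewrite each factor $V_0$ as $\sum_{\gamma \in \A_1(2)}\varphi_\gamma V_{[\gamma]}$ and then pushing every scalar coefficient through the remaining differential operators by Leibniz's rule, producing inductively a finite sum of terms $\psi\, V_{[\alpha_1]}\cdots V_{[\alpha_r]}$ with $\psi \in C_b^\infty(\R^N)$ and $\sum_i \|\alpha_i\| \le \|\beta\|$. Applying Proposition \ref{prop:estimate} term by term and using $\|\beta\| \ge m+1$ (since $\beta \notin \A(m)$) then gives, for $T-t_\ell$ bounded above,
\[
\|V_\beta P_{T-t_\ell} f\|_\infty \le C(T-t_\ell)^{1/2 - \|\beta\|/2}\|\nabla f\|_\infty \le C(T-t_\ell)^{-m/2}\|\nabla f\|_\infty,
\]
so that $\|(P_{s_\ell} - Q^{(s_\ell)})g_\ell\|_\infty \le Cs_\ell^{(m+1)/2}(T-t_\ell)^{-m/2}\|\nabla f\|_\infty$.

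The terminal step $\ell = k$ must be handled separately because $T - t_k = 0$ causes the regularity estimate to degenerate. Here I would simply exploit that $g_k = f$ is bounded and Lipschitz:
\[
|(P_{s_k} - Q^{(s_k)}) f(x)| \le \|\nabla f\|_\infty \left(\mathrm{E}|X_{s_k}(x) - x| + \sum_{j=1}^n \lambda_j |\tilde X_{s_k}(x, w_{s_k,j}) - x|\right),
\]
and both quantities on the right are of order $s_k^{1/2}$: the first by the It\^o isometry together with boundedness of the $V_i$, the second by the $\sqrt{s_k}$-scaling built into the paths $w_{s_k,j}$ combined with a Gronwall-type estimate for the driven ODE. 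This delivers $\|(P_{s_k} - Q^{(s_k)}) f\|_\infty \le C\|\nabla f\|_\infty s_k^{1/2}$. Summing over $\ell = 1, \ldots, k$ recovers the claimed estimate.
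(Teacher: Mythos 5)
The paper does not prove this proposition---it is imported verbatim from \cite[Proposition 3.6]{lyo04}, with only the one-line indication that it follows by combining a telescoping one-step analysis with Proposition \ref{prop:estimate}. Your reconstruction has exactly the architecture of the Lyons--Victoir proof: telescope the difference of the two Markov operator products using contractivity of the cubature operators, apply the stochastic Taylor expansion together with exactness of the cubature on $\A(m)$ to each interior step with $g_\ell = P_{T-t_\ell}f$, convert $\|V_\beta P_{T-t_\ell}f\|_\infty$ into $\|\nabla f\|_\infty$ via the gradient estimate, and handle the terminal step by the Lipschitz property alone. (Your detour through (V0) to rewrite $V_\beta$ as bracket products is unnecessary as stated, since Proposition \ref{prop:estimate} already allows arbitrary $\alpha_j\in\A$ and $V_{[(i)]}=V_i$, but it is harmless and is indeed what one needs if the estimate is only available for $\alpha_j\in\A_1$.)

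There is, however, one inequality that is false as written. For $\beta\in\A(m+2)\setminus\A(m)$ you may have $\|\beta\|=m+2$, in which case $(T-t_\ell)^{1/2-\|\beta\|/2}=(T-t_\ell)^{-(m+1)/2}$, and the claimed bound
\[
(T-t_\ell)^{1/2-\|\beta\|/2}\le C\,(T-t_\ell)^{-m/2}
\]
requires $T-t_\ell$ bounded \emph{below}, not above; it degenerates precisely in the regime $t_\ell\to T$ that the partitions of interest concentrate on. Pairing this with the lumped remainder bound $Cs_\ell^{(m+1)/2}\sup_\beta\|V_\beta g_\ell\|_\infty$ from Proposition \ref{prop:st-taylor} therefore yields $s_\ell^{(m+1)/2}(T-t_\ell)^{-(m+1)/2}$ for those $\beta$, which is weaker than the stated conclusion. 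The repair is to keep the remainder in its finer, term-by-term form: the contribution of a given $\beta$ carries the factor $s_\ell^{\|\beta\|/2}$, not $s_\ell^{(m+1)/2}$, so that
\[
s_\ell^{\|\beta\|/2}\,(T-t_\ell)^{(1-\|\beta\|)/2}
=\frac{s_\ell^{(m+1)/2}}{(T-t_\ell)^{m/2}}\left(\frac{s_\ell}{T-t_\ell}\right)^{(\|\beta\|-m-1)/2},
\]
and the last factor is controlled for $\ell\le k-1$ (it is $\le 1$ when $s_\ell\le T-t_\ell$ and bounded by a constant for the partitions used here, e.g.\ $t_\ell=T(1-(1-\ell/k)^\gamma)$). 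Without this refinement the interior-step bound you assert does not follow from the two propositions you invoke.
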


The equally spaced partition $t_\ell=\ell T/k$ for $\ell=0,\ldots, k$
is not the optimal one in terms of
asymptotic error bound with $k\to\infty$.
Consider taking $t_\ell = T\left(1-\left(1
-\frac{\ell}k\right)^\gamma\right)$
with a constant $\gamma>0$ independent of $k$
($\gamma=1$ corresponds to the equally spaced partition).
By taking $\gamma > m-1$, we have the following estimate
\cite{kus01}:
\[
\sup_{x\in\R^N}\left\lvert
\E{f(Y_k)\mid Y_0=x}-\E{f(X_T(x))}
\right\rvert
\le C k^{-(m-1)/2}\|\nabla f\|_\infty.
\]
Therefore, a cubature formula of degree $m$ with an appropriate time partition
achieves the error rate $\mathrm{O}(k^{-(m-1)/2})$ where $k$ is the number of
partitions.

\begin{rem}
	If we have to compute all the $k$-times concatenation
	of a cubature formula composed of $n$ sample paths,
	we have to solve $\frac{n^{k+1}-1}{n-1}$ ODEs in total \cite{lyo04}.
	When number of ODEs is too large, we reduce the computational complexity through some Monte Carlo
	simulation or subsampling method
	\cite{lit12,tch15,pia17}.
	In this paper, we do not consider efficient implementation
	of concatenation of cubature formula,
	but only consider their constructions.
\end{rem}

\subsection{Known constructions of cubature on Wiener space}\label{sec:const}

We should note that some concrete examples of cubature formulas
on Wiener space are already known.
The simplest case treated in \cite{lyo04} is $m=3$,
where we have a cubature formula composed of
linear paths (i.e., with only one linear segment).

Let $n:=2^d$ and $z_1, \ldots, z_n \in \R^d$ be all the elements of $\{-1, 1\}^d$.
Then, paths 
\[
w_i(t) := t(1, z_i) = (t, tz_i^1, \ldots, tz_i^d),\qquad
0\le t\le 1, \quad i=1,\ldots,n
\]
with weights $\lambda_1=\cdots=\lambda_n=2^{-d}$
construct a cubature formula with $m=3$.
Although $2^d$ is much larger than $|\mathcal{A}(3)| = \mathrm{O}(d^3)$,
we can reduce the number of paths, e.g., using
Carath\'{e}odory-Tchakaloff subsampling.

Constructions for the case $d$ in general and $m=5$ are also given
in \cite{lyo04}, where the authors give
cubature formula using only $\mathrm{O}(d^3)$ paths.
Moreover, \cite{gyu11} constructed higher order cubature formula up to $m=11$,
but the construction is limited to one dimensional space-time ($d=1$).
Ref. \cite{nin19} represents other concrete examples when $m=5$ with general $d$
and the case $(d, m)=(2, 7)$.

All the aforementioned examples are derived
by solving equations in terms of Lie algebra (see Section \ref{sec:algebra}),
which can be directly
written using the Campbell-Baker-Hausdorff formula.
However, as a different approach, we address
an optimization-based construction
in the next section.


\section{Stochastic Tchakaloff's theorem}\label{sec:st-tch}

In the previous section, we have demonstrated
the theoretical support of cubature on Wiener space.
However, it is important to know if such formulas
can actually be constructed.
In this section, we shall state stochastic Tchakaloff's theorem,
which
assures the existence of cubature formula on Wiener space.
Though the stochastic Tchakaloff's theorem is originally given in \cite{lyo04},
we state it in a stronger way by using the concept of relative interior.

Before doing so, we shall introduce the rich algebraic structures behind
the theory of cubature on Wiener space in the following two sections,
which are also essential in our proof of stochastic Tchakaloff's theorem.


\subsection{Tensor and Lie algebra}\label{sec:algebra}
We introduce a tensor algebra which is suitable to our case
\cite{lyo98,lyo04,kus04}.
Denote $\R\oplus\R^d$ by $E$.
Define $U_0(E):=\R\ (=: E^{\otimes0})$.
Let $A_0:=\R$ and $A_1:=\R^d$,
and define
\[
U_n(E):=\bigoplus_{
	\substack{(i_1,\ldots,i_k)\in \{0, 1\}^k,\\
		2k-(i_1+\cdots+i_k)=n}
}
A_{i_1}\otimes \cdots \otimes A_{i_k}
\]
for each positive integer $n$.
Here, the condition for $(i_1,\ldots,i_k)$ means that
$A_{i_1}\otimes\cdots\otimes A_{i_k}$ takes all the arrangement
of $\R$ and $\R^d$ such that
$2(\#\text{ of }\R)+(\#\text{ of } \R^d)=n$.
Then, we consider the tensor algebra of formal series
\[
T((E)):=\bigoplus_{n=0}^\infty U_n
\left(
\simeq \bigoplus_{n=0}^\infty E^{\otimes n}
\right),
\]
{\it where the direct sum hereafter is regarded as a series},
i.e., $T$ is the set of all the {\it infinite} sequences
$(a_n)_{n=0}^\infty$ where $a_n\in U_n$ for each $n\ge0$.
Let $T^{(n)}(E):=\bigoplus_{k=0}^nU_k$,
and let $\pi_n:T((E))\to T^{(n)}(E)$ be the canonical projection
for each $n\ge0$.
As we are only interested in these projections in practice,
we do not need to differentiate the usual tensor algebra from
that of series treated here \cite{rou99}.

It might be easier to understand $T$ as the ring of formal power series
$\R[[Z_0, Z_1,\ldots,Z_d]]$ with noncommutative variables $Z_0,\ldots,Z_d$
(see, e.g., \cite{bau04}).
In that case, we redefine the degree of some monomial
$Y$ by $\deg Y:=2\deg_{Z_0}Y + (\deg_{Z_1}Y+\cdots+\deg_{Z_d}Y)$
(where each $\deg_{Z_i}Y$ denotes the number of $Z_i$ appearing in $Y$)
and regard $U_n(E)$ as the subspace spanned by monomials of degree $n$
for each $n\ge0$.

For any elements $a=(a_n)_{n=0}^\infty, b=(b_n)_{n=0}^\infty \in T((E))$,
we define the sum and product as follows:
\[
a+b:=(a_n+b_n)_{n=0}^\infty,\qquad
a\otimes b:=\left(\sum_{i=0}^n
a_i\otimes b_{n-i}\right)_{n=0}^\infty.
\]
The action by scalar is element-wise.
These definitions are straightforward
if we consider $\R[[Z_0,Z_1,\ldots,Z_d]]$.
Moreover, we define the exponential, inverse, and logarithm:
\[
\exp(a):=\sum_{k=0}^\infty \frac{a^{\otimes k}}{k!},
\quad
a^{-1}:=\frac1{a_0}\sum_{k=0}^\infty \left(1-\frac{a}{a_0}
\right)^{\otimes k},
\quad
\log a:=\log a_0 - \sum_{k=1}^\infty\frac1k\left(1-\frac{a}{a_0}
\right)^{\otimes k},
\]
where the latter two operations are limited for $a\in T((E))$
with $a_0\ne0$.
Note that these operations commute with each projection 
homomorphism $\pi_n$.

Let us introduce the space of Lie series.
Define
\[
L((E)):=
0\oplus E \oplus [E, E] \oplus [E, [E, E]] \oplus\cdots
\subset T((E)) \simeq \bigoplus_{n=0}^\infty E^{\otimes n},
\]
where, for linear subspaces $A, B\in T((E))$,
$[A, B]$ is the linear subspace of $T((E))$ spanned by
Lie brackets
$[a, b]:=a\otimes b-b\otimes a$ ($a\in A$, $b\in B$).
$L((E))$ is the so-called free Lie algebra generated by $E$
\cite{reu93}.
The elements of $L((E))$ are called Lie series.
We also define $L^{(n)}(E):=\pi_n(L((E)))$,
the elements of which are called Lie polynomials.



\subsection{Signature of a path}\label{sec:signature}

We shall introduce the signature
(or Chen series \cite{che57}) of a path,
which summarizes the algebraic structure of iterated integrals.
Let $w=(w^0,\ldots,w^d)\in C_0^0([0, T]; \R\oplus\R^d)$
be a BV path and 
we define its signature.
The integration by $\ds w$ is a 

\begin{dfn}
	For $0\le s\le t\le T$, define $S(w)_{s,t}\in T((E)) \simeq \R[[Z_0, Z_1,\ldots,Z_d]]$ by
	\begin{align*}
	S(w)_{s, t}
	&:=\sum_{n=0}^\infty \int_{s<t_1<\cdots<t_n<t}\ds w(t_1)
	\otimes\cdots\otimes\ds w(t_n)\\
	&:=\sum_{n=0}^\infty
	\sum_{(i_1,\ldots,i_k)\in\A(n)\setminus\A(n-1)}
	\left(\int_{s<t_1<\cdots<t_n<t}\ds w^{i_1}(t_1)
	\cdots\ds w^{i_k}(t_n)\right)
	Z_{i_1}\cdots Z_{i_k},
	\end{align*}
	where the integration by $\ds w$ means the
	Lebesgue--Stieltjes integration.
	In both presentations,
	we think of the $0$-th (or constant) term of $S(w)_{s, t}$ as $1$.
	We call $S(w)_{s, t}$ the {\it signature} of $w$ over $[s, t]$.
\end{dfn}

The following is Chen's theorem.
\begin{thm}[\cite{che57,lyo98}]
	\label{thm:chen}
	The process $S(w)$ satisfies
	$S(w)_{s,t}\otimes S(w)_{t,u}=S(w)_{s,u}$
	for arbitrary $0\le s\le t\le u\le T$. It also holds that
	$\log S(w)_{s,t}\in L((E))$ and therefore
	$\pi_n\left(\log S(w)_{s,t}\right)\in L^{(n)}(E)$.
	
	Moreover, the inverse of this correspondence holds, i.e.,
	for an arbitrary Lie polynomial
	$\mathcal{L}\in L^{(n)}(E)\subset T(E)$ and arbitrary $0\le s < t \le T$,
	there exists a bounded-variation path $w\in C_0^0([0,T]; \R\oplus\R^d)$
	such that $\pi_n\left(\log S(w)_{s,t}\right)=\mathcal{L}$.
\end{thm}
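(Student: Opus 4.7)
The plan is to prove the three assertions in order, with multiplicativity bootstrapping the analytic and surjective parts.

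For Chen's identity, I would partition the $n$-simplex $\{s<t_1<\cdots<t_n<u\}$ by the number $k$ of indices with $t_i<t$. Each piece is (up to a null set) a product of two smaller simplices on $[s,t]$ and $[t,u]$, so the corresponding iterated integral factorises as a tensor product of the form (integral over $[s,t]$ of the first $k$ differentials) $\otimes$ (integral over $[t,u]$ of the last $n-k$). Summing over $k$ and $n$ exactly reproduces the convolution that defines the tensor product on $T((E))$, yielding $S(w)_{s,u}=S(w)_{s,t}\otimes S(w)_{t,u}$.

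For $\log S(w)_{s,t}\in L((E))$, I would first verify the claim for piecewise linear paths and then extend by approximation. A single linear segment with direction $v\in E$ satisfies $S(w)_{s,t}=\exp(v)$, so its log is $v\in L((E))$. For a concatenation with velocities $v_1,\ldots,v_k$, Chen's identity gives $S(w)_{s,t}=\exp(v_1)\otimes\cdots\otimes\exp(v_k)$, and iterated application of the Baker--Campbell--Hausdorff formula in the nilpotent truncation $T^{(n)}(E)$ expresses $\pi_n\log S(w)_{s,t}$ as a Lie polynomial in the $v_i$, hence as an element of $L^{(n)}(E)$. A general BV path is approximated by piecewise linear paths in $1$-variation; signatures are continuous in this topology at each projection level (via the standard estimate bounding $|\pi_n S(w)|$ by the exponential of the total variation), $\log$ is continuous on the fibre $\{a_0=1\}$, and $L^{(n)}(E)$ is finite dimensional, hence closed in $T^{(n)}(E)$. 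The limit therefore lies in $L^{(n)}(E)$ for every $n$, giving $\log S(w)_{s,t}\in L((E))$.

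For surjectivity, I would work inside the free nilpotent group $G^{(n)}(E):=\exp(L^{(n)}(E))\subset T^{(n)}(E)$, observing that the set of all $\pi_n S(w)$ for piecewise linear $w$ lies in $G^{(n)}(E)$ by the previous step and is closed under the group product (by Chen's identity under concatenation) and inversion (by time reversal). To show equality with $G^{(n)}(E)$ it suffices, by connectedness of $G^{(n)}(E)$, to note that its Lie algebra $L^{(n)}(E)$ is generated as a Lie algebra by $E$, while every element of $E$ is reached via a linear segment; this is a nilpotent analogue of Chow's accessibility theorem. Taking $\log$ transports the conclusion to $L^{(n)}(E)$, giving a path for any prescribed Lie polynomial $\mathcal{L}$, and the path is then reparametrised to live on the prescribed interval $[s,t]\subset[0,T]$.

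The main obstacle is the surjectivity step: one must verify that the image of the signature map in $G^{(n)}(E)$ is the whole group, not merely a dense subset. This amounts either to an explicit commutator-loop construction realising each iterated Lie bracket of degree $\le n$ (based on identities such as $\pi_2\log(\exp(\varepsilon v_1)\otimes\exp(\varepsilon v_2)\otimes\exp(-\varepsilon v_1)\otimes\exp(-\varepsilon v_2))=\varepsilon^2[v_1,v_2]$ and their higher-order analogues), together with a secondary induction on degree that absorbs the BCH cross-terms appearing at lower degrees, or to a careful appeal to the nilpotent-group accessibility theorem applied to $G^{(n)}(E)$.
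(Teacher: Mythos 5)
The paper does not prove this theorem: it is imported verbatim from the cited sources (Chen's original paper and Lyons's work, with the piecewise-linear strengthening in Remark \ref{rem-piece-lin} attributed to \cite[Theorem 7.28]{sde-book}), so there is no in-paper argument to compare against. Your outline is, in substance, the standard proof found in those references, and I see no fatal gap: the simplex decomposition by the number of $t_i<t$ is exactly how Chen's identity is proved; the ``piecewise linear first, then approximate'' route to $\log S(w)_{s,t}\in L((E))$ is legitimate (the classical alternative is Ree's shuffle/primitivity criterion, which avoids approximation altogether); and the surjectivity argument via the free nilpotent group $\exp(L^{(n)}(E))$ is the right framework. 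Two points deserve more care than you give them. First, for a general continuous BV path, convergence of piecewise linear interpolations in $1$-variation is more delicate than you suggest; what is robust is uniform convergence together with uniformly bounded variation, which already forces convergence of all iterated integrals by a Helly--Bray argument, and that is the cleaner thing to invoke before using closedness of $L^{(n)}(E)$. Second, you correctly identify the surjectivity step as the crux: the abstract ``subgroup generated by $\exp(E)$ is the whole connected group'' argument needs either Yamabe/orbit-theorem machinery or the explicit commutator-loop induction on degree (absorbing BCH cross-terms of lower degree), and it is the latter explicit construction that the cited reference carries out and that the paper actually relies on later, since Remark \ref{rem-piece-lin} and Theorem \ref{pw-exist} need the representing path to be piecewise linear with finitely many segments, not merely to exist as a BV path.
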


\begin{rem}\label{rem-piece-lin}
	Regarding the latter part,
	a stronger result is known
	\cite[Theorem 7.28]{sde-book}.
	Every Lie polynomial can be exactly
	(not approximately) represented
	as a (truncated) logarithm of
	some continuous piecewise linear path
	with a finite number of linear intervals.
\end{rem}


By virtue of these assertions, we see that the problem of finding paths
constructing a cubature formula is equivalent
to the problem of finding the corresponding Lie polynomials.
The following Brownian-motion version of this result is also important.

\begin{prop}[\cite{kus04,lyo04}]\label{bm-sig}
	Define the (Stratonovich) signature of the Brownian motion
	as an element of $T((E))=\R[[Z_0,Z_1,\ldots,Z_d]]$
	by
	\[
	S(B)_{s, t}=\sum_{n=0}^\infty
	\sum_{(i_1,\ldots,i_k)\in\A(n)\setminus\A(n-1)}
	\left(\int_{s<t_1<\cdots<t_n<t}\ccd B^{i_1}_{t_1}
	\cdots\cd B^{i_k}_{t_n}\right)
	Z_{i_1}\cdots Z_{i_k}
	\]
	for each $0\le s\le t$.
	Then, $\log S(B)_{s, t}$ is almost surely a Lie series.
\end{prop}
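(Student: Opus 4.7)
The plan is to exhibit $\log S(B)_{s,t}$ as an almost sure limit of Lie polynomials, exploiting that the space of Lie polynomials at each truncation level is a finite-dimensional (hence closed) subspace of the corresponding truncated tensor algebra. Since the full claim is equivalent to $\pi_N(\log S(B)_{s,t})\in L^{(N)}(E)$ for every $N$, it suffices to work at each fixed truncation level $N$ and then take a countable intersection of full-probability events.

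I would fix a sequence of piecewise linear approximations $B^{(k)}$ of the Brownian motion over $[s,t]$ (for instance, the dyadic polygonal interpolant), augmented with the deterministic time coordinate $B^0_u=u$. Each $B^{(k)}$ is a continuous BV path, so Chen's theorem (Theorem \ref{thm:chen}) gives $\pi_N(\log S(B^{(k)})_{s,t})\in L^{(N)}(E)$ for every $k$ and every $N$. I would then invoke the Wong--Zakai type convergence of the iterated Riemann--Stieltjes integrals of $B^{(k)}$ to the corresponding iterated Stratonovich integrals of $B$, which is proved inductively in word length using It\^{o}--Stratonovich conversion together with standard $L^p$ estimates; the $B^0$ entries contribute no Stratonovich correction and complicate the bookkeeping only trivially. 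Passing to a subsequence, this convergence is almost sure, hence $\pi_N(S(B^{(k)})_{s,t})\to\pi_N(S(B)_{s,t})$ almost surely in the finite-dimensional space $T^{(N)}(E)$.

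Because the zeroth component of each signature equals $1$, continuity of $\log$ on a neighbourhood of $1$ transfers this to $\pi_N(\log S(B^{(k)})_{s,t})\to\pi_N(\log S(B)_{s,t})$ almost surely. The subspace $L^{(N)}(E)\subset T^{(N)}(E)$ is finite-dimensional, hence closed, so the limit $\pi_N(\log S(B)_{s,t})$ lies in $L^{(N)}(E)$ almost surely. Intersecting the resulting full-probability events over all $N\ge 1$ yields a single event of full measure on which $\log S(B)_{s,t}$ is a Lie series, completing the argument.

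The main obstacle is the convergence step: establishing that the iterated integrals of the piecewise linear interpolants $B^{(k)}$ approach the iterated Stratonovich integrals of $B$ in each tensor degree. The one-fold case is a standard fact, but the multi-fold case requires an induction that keeps careful track of both the It\^{o} term and the Stratonovich correction appearing at each level; this is well established in the rough-paths and weak-approximation literature and is implicit in the references \cite{kus04,lyo04} cited with the proposition. An alternative line of attack would bypass approximation entirely by verifying the shuffle product identity for Stratonovich iterated integrals directly and invoking Ree's theorem to identify group-like elements with exponentials of Lie series, but the approximation route is more self-contained with respect to the machinery already developed above.
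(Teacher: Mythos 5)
Your argument is correct, but note that the paper does not actually prove Proposition \ref{bm-sig}: it is imported as a known result from \cite{kus04,lyo04}, so there is no internal proof to compare against. Your approximation route is nevertheless a legitimate and essentially self-contained way to establish it, and it dovetails with the machinery the paper does develop: the convergence input you need, namely $\pi_N(S(B^{(k)}))\to\pi_N(S(B))$ almost surely for the dyadic polygonal interpolants, is precisely what the paper records (with a rough-path sketch) in Proposition \ref{lyons-time}, and that proposition is itself proved by external rough-path results rather than via Proposition \ref{bm-sig}, so no circularity arises. The remaining steps are all sound: $L^{(N)}(E)$ is a finite-dimensional, hence closed, subspace of $T^{(N)}(E)$; $\pi_N\circ\log$ is a polynomial (hence continuous) map on the affine slice where the zeroth coefficient equals $1$, since $\pi_N$ kills all but finitely many terms of the logarithm series; and the reduction of the full statement to a countable intersection over truncation levels is immediate. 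The one point you rightly flag as the crux --- higher-level Wong--Zakai convergence --- is the same standard result the paper leans on, so deferring it to the literature is no worse than what the paper itself does. Your alternative suggestion (verify the shuffle identity for Stratonovich iterated integrals and invoke the characterization of group-like elements as exponentials of Lie series, i.e.\ Ree's theorem as in \cite{reu93}) is in fact closer in spirit to the original proofs in \cite{kus04,lyo04} and avoids any limiting procedure, at the cost of checking the shuffle relations for stochastic integrals; either route is acceptable here.
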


As we mainly deal with the signature over $[0, 1]$,
hereafter let $S(w)$ and $S(B)$ represent
$S(w)_{0, 1}$ and $S(B)_{0, 1}$, respectively.
We also define for each $\alpha=(i_1,\ldots,i_k)\in\mathcal{A}$,
\[
I^\alpha(w):=\int_{0<t_1<\cdots<t_k<1}\ds w^{i_1}(t_1)
\cdots\ds w^{i_k}(t_k),\quad
I^\alpha(B):=\int_{0<t_1<\cdots<t_k<1}\ccd B^{i_1}_{t_1}
\cdots\cd B^{i_k}_{t_k}.
\]
Note that we set $I^\alpha(w)=I^\alpha(B)=1$ if $\alpha=\emptyset$.

If to define $\mathcal{L}:=\pi_n\left(\log S(B)\right)$,
Indeed, we obtain the expression
\[
\E{\pi_n(S(B))}=\E{\pi_n(\exp\mathcal{L})}.
\]
As $\mathcal{L}$ is a random Lie polynomial
from the previous assertion,
roughly speaking, the generalized Tchakaloff's theorem
(Theorem \ref{MCCC:thm:gen-tchakaloff}) and the inverse statement
in Theorem \ref{thm:chen} yield the existence of a cubature formula
on Wiener space.
However, we should point out that
the surjectivity stated in Theorem \ref{thm:chen} fails
if we require that $w^0(t)$ is monotone
(so the original proof of stochastic Tchakaloff's
theorem in \cite{lyo04} should be modified).

\begin{prop}\label{counterexample}
	Let $n\ge 4$.
	Then, there exists a Lie polynomial $\mathcal{L}\in L^{(n)}(\R\oplus\R)$
	such that $\pi_n(\exp\mathcal{L})$ cannot be expressed as
	$\pi_n(S(w)_{s, t})$ for any BV path $w\in C_0^0([0, T]; \R\oplus\R)$
	with strictly monotone $w^0$.
\end{prop}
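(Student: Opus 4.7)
My plan is to exhibit a single explicit Lie polynomial that serves as a counterexample for every $n \ge 3$ (and thus in particular for every $n \ge 4$): take $\mathcal{L} := [Z_0, Z_1]$, which lies in $L^{(3)}(\R\oplus\R) \subset L^{(n)}(\R\oplus\R)$. The whole argument reduces to a coefficient comparison for the single monomial $Z_0$ on the two sides of the putative identity $\pi_n(\exp\mathcal{L}) = \pi_n(S(w)_{s,t})$, so there is no serious analytic difficulty; the one step worth tracking is the grading $\deg Z_0 = 2$, $\deg Z_1 = 1$ inherited from the SDE setting, since it is what makes $Z_0$ the unique degree-$2$ monomial.

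First I would compute the coefficient of the degree-$2$ word $Z_0$ in $\pi_n(\exp\mathcal{L})$. Writing $\mathcal{L} = Z_0 Z_1 - Z_1 Z_0$, the Lie polynomial $\mathcal{L}$ is homogeneous of degree $3$, so every tensor power $\mathcal{L}^{\otimes k}$ with $k\ge 1$ has degree $\ge 3$. Expanding $\exp\mathcal{L} = 1 + \mathcal{L} + \tfrac{1}{2}\mathcal{L}^{\otimes 2} + \cdots$ and truncating by $\pi_n$, no term other than the constant summand $1$ contributes to degree $\le 2$, and hence the coefficient of $Z_0$ in $\pi_n(\exp\mathcal{L})$ is $0$. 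On the path side, the coefficient of $Z_0$ in $\pi_n(S(w)_{s,t})$ is, directly from the definition of the signature, the first-order iterated integral $\int_s^t \ds w^0(\tau) = w^0(t) - w^0(s)$.

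Finally, if $w^0$ is strictly monotone on $[0, T]$, then $w^0(t) - w^0(s) \ne 0$ for any $0 \le s < t \le T$; equating the two $Z_0$-coefficients computed above would therefore force $w^0(t) - w^0(s) = 0$, a contradiction. This proves that no BV path with strictly monotone $w^0$ can realise $\pi_n(\exp\mathcal{L})$, which is precisely the statement of the proposition. The only piece of bookkeeping is verifying that no higher-degree Lie word appearing in $\exp \mathcal{L}$ secretly reintroduces a $Z_0$ contribution at degree $2$; this is immediate from the degree count above, so the argument is in essence a one-line coefficient check.
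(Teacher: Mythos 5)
Your argument is internally correct as a proof of the literal statement: $[Z_0,Z_1]$ is a Lie polynomial, $\exp([Z_0,Z_1])$ has vanishing $Z_0$-coefficient because the exponent is homogeneous of degree $3$, whereas the $Z_0$-coefficient of $S(w)_{s,t}$ is the increment $w^0(t)-w^0(s)$, which is nonzero when $w^0$ is strictly monotone and $s<t$. However, this is a genuinely different, and much weaker, counterexample than the paper's, and the difference matters for the role the proposition plays in the paper. Your example is ruled out already at level $2$ for the trivial reason that its time increment is zero; it works for every $n\ge 2$ (indeed $\mathcal{L}=0$ would do), which leaves the hypothesis $n\ge 4$ unexplained and should be a warning sign. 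The paper instead takes $\mathcal{L}=e_0+[e_0,e_1]$, whose $Z_0$-coefficient is $1$ and hence perfectly consistent with a strictly monotone (even $w^0(t)=t$) time component; the obstruction there is genuinely at level $4$, via the implication that for strictly monotone $w^0$ one has $S(w)^{(1,1,0)}_{0,T}=\int_0^T \tfrac12 w^1(t)^2\,\ds w^0(t)=0 \Rightarrow w^1\equiv 0 \Rightarrow S(w)^{(1,0)}_{0,T}=0$, while $\exp(e_0+[e_0,e_1])$ has $(1,1,0)$-coefficient $0$ but $(1,0)$-coefficient $-1$. This is the content that actually supports the surrounding claim that the original proof of stochastic Tchakaloff's theorem in \cite{lyo04} needs modification: the Lie polynomials arising there are of the form $\pi_n(\log S(B))$ and always have $Z_0$-coefficient equal to the elapsed time, so a counterexample with zero time increment says nothing about that setting. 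If you want your write-up to serve the paper's purpose (and to justify $n\ge 4$), you need a witness whose level-$\le 3$ data is realizable by a time-monotone path, as in the paper's choice.
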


\begin{proof}
	Consider an $\R\oplus\R$-valued
	continuous BV path $w=(w^0, w^1)$ on $[0, T]$
	that starts at the origin with $w^0$ strictly increasing.
	We have
	\[
	S(w)^{(1,1,0)}_{0, T}=\int_{0<t_1<t_2<t_3<T}\dd w^1(t_1) \dd w^1(t_2) \dd w^0(t_3)
	=\int_0^T \frac{w^1(t)^2}2 \dd w^0(t).
	\]
	Because $w^0$ is strictly increasing, there exists a differentiation $\frac{\ds w^0(t)}{\ds t} \in L^1([0, T])$
	that is positive almost everywhere on $[0, T]$.
	Therefore, if $S(w)^{(1,1,0)}_{0, T}=0$ holds,
	then $w^1$ is zero almost everywhere and
	so $S(w)^{(1,0)}_{0, T}=0$ holds in particular.
	We have the same conclusion for strictly decreasing $w^0$,
	so, we have
	\[
		S(w)^{(1,1,0)}_{0, T} = 0
		\quad \Longrightarrow
		S(w)^{(1, 0)}_{0, T} = 0
	\]
	for each $w=(w^0, w^1)$ with strictly monotone $w^0$.
	
	Let $e_0, e_1 \in\R\oplus\R$ be the standard basis.
	If we consider
	\[
		Y:=\exp(e_0+[e_0, e_1]) =
		\sum_{i=0}^\infty \frac1{n!}(e_0 + e_0\otimes e_1 - e_1\otimes e_0)^{\otimes n}\in T((\R\oplus\R)),
	\]
	then its coefficient of $e_1\otimes e_1\otimes e_0$ is obviously zero, whereas that of $e_1\otimes e_0$ is $-1$.
	As $e_0+[e_0, e_1]$ is clearly a Lie polynomial,
	the proof is complete.
\qed\end{proof}

Although the surjectivity fails,
we can actually prove the existence of a cubature formula
with $w^0(t) = t$ in the following section.
We use the following well-known approximation statement
for the Brownian motion.
\begin{prop}\label{lyons-time}
	Let $n$ be a positive integer
	and $B$ be a $d$-dimensional Brownian motion.
	Then, with probability one,
	the sequence of piecewise linear paths
	$w_1, w_2, \cdots \in C_0^0([0, 1]; \R\oplus\R^d)$
	given by linearly interpolating
	$w_k(j/2^k)= B(j/2^k)$ for $j = 0, 1, \ldots, 2^k$
	satisfies
	\[
		\pi_n(S(w_k)) \to \pi_n(S(B)),\qquad
		k \to \infty.
	\]
\end{prop}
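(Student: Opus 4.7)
The plan is to recognize Proposition \ref{lyons-time} as a Wong--Zakai-type convergence theorem for iterated Stratonovich integrals under dyadic piecewise linear approximation of Brownian motion. Since $\pi_n$ projects onto a finite-dimensional subspace, it suffices to prove coordinate-wise convergence: for every multi-index $\alpha = (i_1,\ldots,i_k)$ contributing to $\pi_n(S(\cdot))$, I need to show $I^\alpha(w_k) \to I^\alpha(B)$ almost surely as $k \to \infty$. As a preliminary reduction, the time coordinate is trivial: since $B^0_t := t$ and $w_k$ interpolates linearly between exact values at dyadic points, $w_k^0(t) \equiv t$ on $[0, 1]$, so multi-indices involving only $0$ agree exactly, and mixed indices reduce via Fubini to integrals against polynomials in $t$ of iterated integrals in the genuinely Brownian components.

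For the substantive convergence, my preferred route is via the theory of rough paths. The Brownian path, enhanced with its L\'evy area, canonically lifts to a geometric $p$-rough path for any $p \in (2, 3)$, and the dyadic piecewise linear approximations $w_k$, viewed as rough paths, converge almost surely to this lift in $p$-variation topology --- this is the classical rough-path form of the Wong--Zakai theorem. Lyons' extension theorem then provides a continuous extension of the truncated signature at any finite level in this topology, and composition yields $\pi_n(S(w_k)) \to \pi_n(S(B))$ almost surely. A more hands-on alternative proceeds by induction on $|\alpha|$: the case $|\alpha| = 1$ is exact because endpoint values agree, $w_k^{i}(1) = B^{i}(1)$; the inductive step exploits the fact that Stratonovich integration by parts $\mathrm d(XY) = X\circ \mathrm dY + Y \circ \mathrm dX$ has the same form as the BV product rule, combined with iterated BDG estimates along the dyadic grid giving $L^2$ decay of order $2^{-k/2}\sqrt{k}$ for $\sup_{t\in[0,1]}|w_k(t) - B(t)|$, then a Borel--Cantelli argument to promote $L^2$ convergence to almost-sure convergence.

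The principal obstacle is securing almost-sure (rather than merely $L^2$) convergence simultaneously across all the finitely many multi-indices relevant to $\pi_n$. In the rough-path route this is packaged cleanly inside the pathwise $p$-variation estimates for the Brownian lift; in the direct inductive route it requires summable BDG-type tail bounds through successive levels of iteration, with care that the moment constants remain controllable as the level grows up to $n$. Either way, the fact that the dyadic refinement is geometric (mesh $2^{-k}$) makes summability in $k$ automatic, so the almost-sure upgrade succeeds.
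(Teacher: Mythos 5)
Your primary route --- almost-sure convergence of the dyadic piecewise-linear approximations to the Brownian rough path in $p$-variation, handling the time component separately as a smooth perturbation, and then invoking the continuity of the Lyons lift to pass from level $2$ to level $n$ --- is exactly the argument the paper sketches, citing the same standard results. The proposal is correct and matches the paper's approach.
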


\begin{proof}
	We only give a sketch as we use arguments based on rough paths.
	If there is no time-term (the zero-th entry of the path) and $n=2$,
	then the result yields from the well-known dyadic piecewise-linear approximation for the Brownian rough path
	(see \cite[Proposition 3.6]{roughpaths} or \cite[Proposition 13.18]{sde-book}).
	Adding the time (zero-th entry) is not difficult as it is sufficiently smooth and does not affect the regularity of the rough path.
	To generalize $n$ from $n=2$, it suffices to observe the continuity of the ``Lyons lift" (also see \cite[Chapter 9]{sde-book}).
\qed\end{proof}



\subsection{Proof of stochastic Tchakaloff's theorem}\label{proof-stt}
Throughout the section, we fix a positive integer $m$
and consider elements in $T^{(m)}(E)$.
Note that $T^{(m)}(E)$ can naturally be regarded in the same light
as $F:=\R^{\mathcal{A}(m)}$.
Define a set $G$ (as a subset of $F$) by
\[
G:=
\{S(w)\mid w\in C_0^0([0, 1]; \R\oplus\R^d)\ \text{is a BV path},
\ w^0(1)=1\}
\]
From Theorem \ref{thm:chen}, this coincides with the set of
$\exp(\mathcal{L})$, where $\mathcal{L}$ is a Lie polynomial such that
the coefficient of $Z_0$ is $1$.

We denote the distribution of $S(B)$ over $F$ by
$\mathrm{P}_{S(B)}$.
We shall argue the relation of $G$ and $\supp\mathrm{P}_{S(B)}$
in the following.

\begin{prop}\label{BM-full}
	It holds that $\aff \supp \mathrm{P}_{S(B)}=\aff G$.
\end{prop}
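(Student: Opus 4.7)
My plan is to establish the inclusions $\aff \supp \mathrm{P}_{S(B)} \subseteq \aff G$ and $\aff G \subseteq \aff \supp \mathrm{P}_{S(B)}$ separately. The former is direct: by Proposition~\ref{lyons-time}, almost surely the dyadic piecewise linear interpolations $w_k$ of $B$ satisfy $\pi_m(S(w_k)) \to \pi_m(S(B))$ in $F$, and because $B^0_t = t$ we have $w_k^0(1) = 1$, so each $w_k$ lies in the BV class defining $G$ and $S(w_k) \in G$. Hence $S(B) \in \overline{G}$ almost surely, yielding $\supp \mathrm{P}_{S(B)} \subseteq \overline{G}$; since $F$ is finite-dimensional, $\aff \overline{G} = \aff G$ and the inclusion follows.

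For the reverse inclusion I would pass to the dual formulation: it suffices to show that every linear form $\phi \in F^*$ with $\phi(S(B)) = c$ almost surely satisfies $\phi \equiv c$ on $G$. Writing $G = \exp(\Lambda)$ with $\Lambda := \{\mathcal{L} \in L^{(m)}(E) : \text{coefficient of } Z_0 \text{ in } \mathcal{L} \text{ is } 1\}$, the map $f(\mathcal{L}) := \phi(\exp \mathcal{L})$ is a polynomial on the finite-dimensional affine space $\Lambda$, so it suffices to show that $f \equiv c$ on a Zariski-dense subset of $\Lambda$. To obtain such a subset, I would invoke the Stroock--Varadhan-type support theorem for the Brownian rough path in $p$-variation topology ($p > 2$), which together with the continuity of the truncated signature gives $\{S(\gamma) : \gamma \in C^\infty,\ \gamma^0(t) = t\} \subseteq \supp \mathrm{P}_{S(B)}$, hence $f \equiv c$ on $\Lambda_{\mathrm{sm}} := \{\log S(\gamma) : \gamma \in C^\infty,\ \gamma^0(t) = t\}$.

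The main obstacle is then showing that $\Lambda_{\mathrm{sm}}$ is Zariski-dense in $\Lambda$. Proposition~\ref{counterexample} already shows $\Lambda_{\mathrm{sm}} \neq \Lambda$ set-theoretically, so this step is nontrivial. My approach is to compute the Gateaux derivative of $(\gamma^1, \ldots, \gamma^d) \mapsto \log S(t, \gamma^1, \ldots, \gamma^d)$ at a generic smooth $\gamma$ and to verify that it surjects onto the tangent hyperplane $\{\delta \mathcal{L} \in L^{(m)}(E) : \text{coefficient of } Z_0 \text{ is } 0\}$---a Lie-algebraic density statement that reflects the fact that perturbing the $\gamma^i$ at varying time scales can generate arbitrary Lie brackets involving $Z_0, Z_1, \ldots, Z_d$. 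Once this surjectivity is in hand at a single point, the implicit function theorem places a Euclidean-open subset of $\Lambda$ inside $\Lambda_{\mathrm{sm}}$, forcing Zariski density and completing the argument.
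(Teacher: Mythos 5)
Your first inclusion is fine and essentially equivalent to the paper's (the paper gets $S(B)\in G$ a.s.\ directly from Proposition~\ref{bm-sig} plus Chen's theorem, rather than $S(B)\in\overline{G}$ via Proposition~\ref{lyons-time}; both give $\supp\mathrm{P}_{S(B)}\subseteq\aff G$). Your reduction of the reverse inclusion to ``every affine functional that is a.s.\ constant on $S(B)$ is constant on $G$'' is also exactly the paper's reduction. Where you genuinely diverge is in how that statement is proved: the paper proves it as Lemma~\ref{long} by elementary stochastic calculus (It\^{o} isometry, the orthogonality relations among iterated integrals with distinct reduced indices, and an induction on the index structure), obtaining the conclusion directly for \emph{every} BV path with $w^0(1)=1$. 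You instead invoke the support theorem for the Brownian rough path to get constancy on $\Lambda_{\mathrm{sm}}=\{\log S(\gamma):\gamma\ \text{smooth},\ \gamma^0(t)=t\}$ and then try to propagate it to all of $\Lambda$ by Zariski density, using that $\phi\circ\exp$ is polynomial on the truncated (nilpotent) algebra. That architecture is coherent, and it would buy a proof that bypasses the appendix entirely.

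The genuine gap is that the pivotal step --- that $\Lambda_{\mathrm{sm}}$ contains a Euclidean-open subset of $\Lambda$, hence is Zariski dense --- is only announced, not proved. You correctly flag it as the main obstacle (Proposition~\ref{counterexample} shows $\Lambda_{\mathrm{sm}}\subsetneq\Lambda$, so density cannot be free), but ``compute the Gateaux derivative and verify that it surjects onto the tangent hyperplane at a generic $\gamma$'' is precisely the content that needs an argument: at the pure drift path $\gamma=(t,0,\dots,0)$ the derivative image is spanned by $\{e^{s\,\mathrm{ad}_{Z_0}}Z_i\}$, which misses brackets such as $[Z_i,Z_j]$, so surjectivity genuinely fails at some controls and you must exhibit (or prove the existence of) a regular point. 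This is a strong-accessibility/Krener-type statement for the control system on the free nilpotent group with drift $Z_0$; it is true, because the ideal generated by $Z_1,\dots,Z_d$ in $L^{(m)}(E)$ is exactly the codimension-one tangent space of $\Lambda$, but establishing it requires an argument of roughly the same weight as the paper's Lemma~\ref{long}. As written, your proof replaces the one hard lemma of the paper with an unproved claim of comparable difficulty, so it is incomplete at exactly the point where the proposition's content lies.
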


\begin{proof}
	From Proposition \ref{bm-sig}, $\supp\mathrm{P}_{S(B)}\subset \aff G$ holds
	(as $\aff G$ includes the closure of $G$).
	Therefore, it is sufficient to show $G\subset \aff\supp\mathrm{P}_{S(B)}$.
	
	As $\aff\supp\mathrm{P}_{S(B)}$
	is the intersection of all the hyperplanes, which includes
	$\supp\mathrm{P}_{S(B)}$,
	it can be represented as
	\[
	\aff\supp\mathrm{P}_{S(B)}
	=\bigcap_{({\scriptsize\bm{c}}, d)\in H}\{ \bm{v}\in F
	\mid \bm{c}^\top\bm{v}=d \},
	\]
	where $H$ is the family of all $(\bm{c}, d)\in F\times\R$
	such that $\bm{c}^\top S(B)=d$ holds almost surely.
	The problem is now reduced to the statement
	\[
	\bm{c}^\top S(B)=d\ \  \text{a.s.}\quad \Longrightarrow \quad
	\bm{c}^\top S(w)=d
	\]
	for every $w$ appearing in the definition of $G$.
	This results from the following lemma
	as $\pi_0(S(B))=\pi_0(S(w))=1$ always holds.
\qed\end{proof}

The following is the key lemma in the above proof.
We give its proof in the appendix as it is elementary.
\begin{lem}\label{long}
	Let $(c_\alpha)_{\alpha\in\mathcal{A}}
	\in\R^{\mathcal{A}}$
	be a vector whose all but finite entries are zero. 
	Then, if $\sum_{\alpha\in\mathcal{A}}c_\alpha I^\alpha(B)=0$ holds almost surely,
	\[
	\sum_{\alpha\in\mathcal{A}}c_\alpha I^\alpha(w)=0
	\]
	holds for every bounded-variation path
	$w\in C_0^0([0, 1]; \R\oplus\R^d)$ with $w^0(1)=1$.
\end{lem}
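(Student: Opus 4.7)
I proceed by induction on $r^* := \max\{n_1(\alpha) : c_\alpha \ne 0\}$, where $n_1(\alpha) := |\{j : \alpha_j \ne 0\}|$ counts the number of non-zero entries of $\alpha$. The three key tools are Girsanov's theorem combined with a polynomial expansion in a perturbation parameter $\varepsilon$, a reparameterization-plus-polynomial-vanishing argument on piecewise-linear paths, and Proposition~\ref{lyons-time} to transfer the conclusion back to $B$ and close the induction.

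\textbf{Girsanov step.} For any Cameron-Martin path of the form $h = (0, h^1, \ldots, h^d)$ with $\dot h \in L^2$, Girsanov's theorem implies $\sum_\alpha c_\alpha I^\alpha(B + \varepsilon h) = 0$ almost surely for every $\varepsilon > 0$. Splitting $d(B + \varepsilon h)^{i_j} = dB^{i_j} + \varepsilon\, dh^{i_j}$, which reduces to $dt$ without any $\varepsilon$-contribution whenever $i_j = 0$, writes each $I^\alpha(B + \varepsilon h)$ as a polynomial in $\varepsilon$ of degree $n_1(\alpha)$. For almost every $\omega$ this polynomial vanishes at every rational $\varepsilon$, hence identically; extracting the coefficient of $\varepsilon^{r^*}$ picks out exactly the terms in which every non-zero-index factor is $dh$ and every zero-index factor remains $dt$, yielding the purely deterministic identity
\[
    \sum_{n_1(\alpha) = r^*} c_\alpha\, I^\alpha(t, h^1, \ldots, h^d) = 0.
\]
By density of Cameron-Martin paths in continuous BV paths and the continuity of iterated integrals, this identity extends to every BV path $h$ with $h^0 \equiv 0$.

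\textbf{Polynomial extension and inductive closure.} To lift the identity to arbitrary BV $w$ with $w^0(1) = 1$, I argue on piecewise-linear paths. Fix breakpoints $0 = t_0 < \cdots < t_n = 1$ and view $w$ as determined by values $v_0 = 0, v_1, \ldots, v_n$ subject only to $v_n^0 = 1$. The expression $\sum_{n_1(\alpha) = r^*} c_\alpha I^\alpha(w)$ is then a polynomial in the parameters $(v_j^i)$. Whenever $v_0^0 < v_1^0 < \cdots < v_n^0$, the time-change $s = w^0(t)$ turns $w$ into a path $\tilde w$ with $\tilde w^0(s) = s$, so the reparameterization-invariance of iterated integrals combined with the identity from the previous step forces this polynomial to vanish on the open, positive-measure subset corresponding to strictly-increasing $w^0$; hence the polynomial vanishes identically and the identity holds for every piecewise-linear $w$. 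A density argument then extends it to every BV $w$ with $w^0(1) = 1$. Applying this identity to the dyadic piecewise-linear approximants of $B$ and invoking Proposition~\ref{lyons-time} gives $\sum_{n_1(\alpha) = r^*} c_\alpha I^\alpha(B) = 0$ almost surely; subtracting from the hypothesis strictly decreases $r^*$, so the induction closes. The base case $r^* = 0$ is immediate since $I^{(0, \ldots, 0)}(w) = 1/k!$ for any BV $w$ with $w^0(1) = 1$, whether $w = B$ or not, so the relation transfers unchanged. The main technical subtlety is the polynomial-vanishing step: its cleanness relies precisely on iterated integrals of piecewise-linear paths being polynomial in the breakpoint data, which avoids the need for a Chen-type decomposition along monotone segments in handling non-monotone $w^0$.
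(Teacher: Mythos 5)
Your proof is correct, but it takes a genuinely different route from the paper's. The paper works at the level of It\^o integrals: using the It\^o isometry together with the identity $\int_0^t\bigl(\int_0^s u_r\,\mathrm{d}B^i_r\bigr)\mathrm{d}s=\int_0^t(t-r)u_r\,\mathrm{d}B^i_r$, it shows that iterated It\^o integrals with different ``skeletons'' $\alpha^+$ (the subsequence of nonzero indices) are orthogonal in $L^2$, reduces the hypothesis to one relation per skeleton, transfers each relation to deterministic paths with $w^0(t)=t$ by induction on $|\alpha^+|$ (peeling off the trailing nonzero index with the same isometry), and only then rewrites the Stratonovich integrals in It\^o form. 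You stay entirely in the Stratonovich picture and replace the orthogonality computation by Cameron--Martin quasi-invariance: grading in powers of $\varepsilon$ after the shift $B\mapsto B+\varepsilon h$ isolates the layer $\{\alpha: n_1(\alpha)=r^*\}$ in much the same way that the paper's $L^2$-decomposition isolates each skeleton (your grading is coarser --- by $|\alpha^+|$ rather than by $\alpha^+$ itself --- but the layer-by-layer induction makes that sufficient), and Proposition \ref{lyons-time} carries the resulting deterministic identity back to $B$ to close the induction. What you gain is complete avoidance of It\^o calculus and of the Stratonovich-to-It\^o conversion, which is the most bookkeeping-heavy part of the paper's appendix; what you pay is the need to justify that the iterated Stratonovich integrals admit versions for which the shift $B\mapsto B+\varepsilon h$ genuinely expands multilinearly almost surely --- this is standard (e.g.\ take the Wong--Zakai versions, for which the shift commutes with the dyadic approximation), but it deserves a sentence, since the stochastic integrals are a priori defined only up to null sets. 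The final step --- relaxing $w^0(t)=t$ to $w^0(1)=1$ by reparametrising strictly increasing $w^0$ and exploiting that the truncated signature of a piecewise linear path is polynomial in its increments --- coincides with the paper's closing perturbation argument, and your formulation (vanishing on a nonempty open set of parameters forces the polynomial to vanish identically) is in fact more precise than the paper's ``equals zero at infinitely many points.''
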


\newcommand{\I}{\mathcal{I}}

The following is a stochastic version of Tchakaloff's theorem,
which assures the existence of cubature formulas on Wiener space.
It is stated in a little stronger way than \cite[Theorem 2.4]{lyo04},
wherein the ``relative interior" did not appear.
Note that the time interval considered hereafter is $[0, 1]$.

\begin{thm}\label{st-tch}
	Let $m$ be a positive integer.
	There exist $n$ BV paths $w_1,\ldots,w_n
	\in C_0^0(\R\oplus\R^d)$
	and $n$ positive weights $\lambda_1,\ldots,\lambda_n$ whose sum is $1$
	that satisfy $n\le |\mathcal{A}(m)|$ and 
	\[
	\E{\pi_m(S(B))}
	=\sum_{i=1}^n \lambda_i \pi_m(S(w_i)).
	\]
	Moreover, if we loosen the condition to be $n\le 2|\mathcal{A}(m)|$, 
	$w_1, \ldots, w_n$ can be taken
	such that
	$\pi_m(S(w_i))$ is contained in $G$ for each $i$,
	$\aff\{ \pi_m(S(w_1)), \ldots, \pi_m(S(w_n))\}=\aff G$,
	and
	\[
	\E{\pi_m(S(B))} \in \ri \cv\{\pi_m(S(w_1)), \ldots, \pi_m(S(w_n))\}.
	\]
\end{thm}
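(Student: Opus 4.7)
The plan is to identify $\E{\pi_m(S(B))}$ as a point of $\ri\cv G$ and then invoke Carath\'{e}odory (for the first assertion) and Theorem \ref{MCCC:thm:ri} (for the second) directly with $A=G$, so that the extracted points automatically lie in $G$.

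First I would verify the containment $\supp\mathrm{P}_{\pi_m(S(B))}\subset\overline{G}$. This is supplied by Proposition \ref{lyons-time}: the dyadic piecewise linear interpolants $w_k$ of $B$ are BV paths with $w_k^0(1)=1$, hence $\pi_m(S(w_k))\in G$ for every $k$, and the almost-sure convergence $\pi_m(S(w_k))\to\pi_m(S(B))$ shows that the distribution sits on $\overline{G}$. Together with Proposition \ref{BM-full} this yields $\aff\supp\mathrm{P}_{\pi_m(S(B))}=\aff G=\aff\overline{G}$. Now \eqref{MCCC:lem:exp-ri} places $\E{\pi_m(S(B))}$ in $\ri\cv\supp\mathrm{P}_{\pi_m(S(B))}$, and since $\supp\mathrm{P}_{\pi_m(S(B))}\subset\overline{G}$ with the same affine hull, we obtain $\E{\pi_m(S(B))}\in\ri\cv\overline{G}$.

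The crucial convex-analytic identity is $\ri\cv G=\ri\cv\overline{G}$, valid for arbitrary subsets of $\R^D$ in finite dimensions: since $\overline{\cv G}$ is closed and convex it contains $\overline{G}$, hence $\cv\overline{G}\subset\overline{\cv G}$, giving $\overline{\cv G}=\overline{\cv\overline{G}}$; combined with the standard fact $\ri C=\ri\overline{C}$ for convex $C$, this yields the claim. Consequently $\E{\pi_m(S(B))}\in\ri\cv G$.

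From this placement both conclusions follow by discrete geometry applied to $A=G$. For the first assertion, Carath\'{e}odory in the affine subspace $\aff G$ -- whose dimension is at most $|\mathcal{A}(m)|-1$, because every $\pi_m(S(w))\in G$ has constant coefficient $1$ -- writes $\E{\pi_m(S(B))}$ as a convex combination of at most $|\mathcal{A}(m)|$ points of $G$; deleting any zero-weight points gives the required positive weights. For the second, Theorem \ref{MCCC:thm:ri} applied with $A=G$ and $x=\E{\pi_m(S(B))}$ produces at most $2\dim\aff G\le 2|\mathcal{A}(m)|$ paths $w_1,\ldots,w_n$ with $\pi_m(S(w_i))\in G$, affine span equal to $\aff G$, and the relative-interior membership, from which strictly positive weights summing to one come for free. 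The main obstacle is the passage from $\overline{G}$ down to $G$ itself -- upgrading membership in $\ri\cv\overline{G}$ to membership in $\ri\cv G$ -- which is exactly where Proposition \ref{BM-full}'s affine-hull characterization is indispensable, since without the equality $\aff\supp\mathrm{P}_{\pi_m(S(B))}=\aff G$ the relative-interior content would be lost when restricting from the closure of $G$ to $G$.
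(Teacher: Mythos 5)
Your proof is correct and follows essentially the same route as the paper's: combine \eqref{MCCC:lem:exp-ri} with Proposition \ref{BM-full} to place $\E{\pi_m(S(B))}$ in the relative interior of the convex hull of $G$, then apply Theorem \ref{MCCC:thm:ri} (resp.\ Carath\'{e}odory in $\aff G$) to extract the $2|\mathcal{A}(m)|$ (resp.\ $|\mathcal{A}(m)|$) points. The one place you are more explicit than the paper is the passage from $\ri\cv\overline{G}$ to $\ri\cv G$ via $\ri\cv G=\ri\cv\overline{G}$, a detail the paper's proof leaves implicit; your justification of it is valid.
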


\begin{proof}
	By virtue of Carath\'{e}odory's theorem,
	the former part follows from the latter part.
	We here show the latter part.
	
	From Theorem \ref{MCCC:thm:ri}, \eqref{MCCC:lem:exp-ri}
	and Proposition \ref{BM-full},
	we can find $n$ Lie polynomials
	$\mathcal{L}_1,\ldots, \mathcal{L}_n$
	such that 
	each $\pi_m(\exp \mathcal{L}_i)$ is contained
	in $G$,
	and $\E{\pi_m(S(B))}$ is contained in the relative interior of
	their convex hull.
	Here, $n$ can actually be taken such that
	$n\le 2\dim G \ (\le 2 |\mathcal{A}(m)|)$
	because of Theorem \ref{MCCC:thm:ri}.
	From the correspondence stated in Chen's theorem
	(Theorem \ref{thm:chen}),
	we can find a desired set of paths in $C_0^0([0, 1]; \R\oplus\R^d)$.
	Note that the condition $\pi_m(\exp\mathcal{L}_i)\subset G$ implies
	that the corresponding path satisfies $w_i^0(1)=1$.
\qed\end{proof}

\begin{rem}\label{rem-lyons}
	By exploiting Proposition \ref{lyons-time},
	we can also prove the same result
	even if we require $w_i^0(t)=t$ for each $i=1,\ldots,n$ and $0\le t\le 1$.
	Although Proposition \ref{lyons-time} only asserts an approximation result,
	``relative interior" argument fills the gap.
\end{rem}


\section{Monte Carlo approach to cubature on Wiener space}\label{chap:optCoW}

In this section, we investigate a way to construct cubature formulas,
which is based on mathematical optimization instead
of Lie algebra.
We limit the arguments to cubature formula composed of
continuous piecewise linear paths,
and propose a construction based on Monte Carlo sampling,
which is the application of \cite{hayakawa-MCCC} to our case.
We also carry out numerical experiments in concrete cases.


Although existing constructions of cubature formulas on Wiener
space are based on Lie-algebraic equations,
we can simply regard the cubature construction
as an optimization problem.
One such way is to consider an LP problem,
which is analogous to ordinary cubature problems
treated in Section \ref{prelim}.
From this viewpoint,
we can naively generate many sample paths
and then reduce their number by using
Carath\'{e}odory-Tchakaloff subsampling.
We later see that this approach is applicable
at least theoretically (Section \ref{MCWC}).




\subsection{Signature of continuous BV paths}\label{sec:sig-BV}

In this section, we see the properties of BV paths
and their signature.
We also see that the truncated signature of continuous BV paths
can be approximated with any accuracy
by that of piecewise linear paths.



Let $w=(w^0, w^1, \ldots, w^d)\in C_0^0([0, 1]; \R\oplus\R^d)$
be BV paths.
We define the total variation of $w$ as
\begin{equation}
	\|w\|_1
	:=\sup_{\Delta}
	\sum_{i=1}^k\max_{0\le j\le d} |w^j(t_i)-w^j(t_{i-1})|
	\left(
	=\sup_{\Delta}\sum_{i=1}^k\|w(t_i)-w(t_{i-1})\|_{\infty}
	\right),
	\label{def:TV}
\end{equation}
where $\Delta$ is the partition of $[0,1]$ by
$0=t_0<t_1<\cdots<t_k=1$ and $k$ varies in $\sup_\Delta$.
We call $w$ a BV path if $\|w\|_1<\infty$ holds.
Note that other norms are also equivalent as
the space $\R\oplus\R^d$ is finite-dimensional
though we are using the sup-norm $\|\cdot\|_\infty$ of $\R\oplus\R^d$

We can reparameterize $w$ so that it becomes Lipschitz continuous
if necessary.
Indeed, if we let $\|w|_{[s, t]}\|_1$ be
the total variation of $w$ over $[s, t]$
and
\[
\tau(t):=\frac{\|w|_{[0, t]}\|_1}{\|w\|_1}
\]
for a nonconstant $w$,
then $w\circ\tau$ is a well-defined Lipschitz path
($\tau$ becomes a nondecreasing function onto $[0, 1]$).
It is also important that the signature is invariant
under this reparametrization
(see, e.g., \cite[Proposition 1.42 and 7.10]{sde-book};
note that $w^0(t)=t$ might be
lost then even if the original path satisfies it).

Hereafter, we may assume that
there exist $d+1$ derivative functions
$f^0, f^1, \ldots, f^d \in L^\infty([0, 1]; \R)$ such that
\[
w^j(t)=\int_0^t f^j(s) \dd s,
\qquad
t\in[0, 1],\ j=0,1,\ldots,d.
\]
In this case, the total variation of $w$ can be written as
\[
\|w\|_1=\int_0^1 \max_{0\le j\le d}|f^j(s)|\dd s.
\]

Signature can also be represented by the derivatives as
\begin{align*}
	I^\alpha(w)
	&=\int_{0<t_1<\cdots<t_k<1}\ds w^{i_1}(t_1)\cdots\ds w^{i_k}(t_k)\\
	&=\int_{0<t_1<\cdots<t_k<1}f^{i_1}(t_1)\cdots f^{i_k}(t_k)
	\dd t_1\cdots \ds t_k.
\end{align*}
for each multiindex $\alpha=(i_1,\ldots,i_k)\in \mathcal{A}$.

As a special case of BV paths,
we are interested in (continuous) piecewise linear paths,
which are easy to implement on computers.
Let $0=s_0<s_1<\cdots<s_n=1$ be a partition of $[0, 1]$.
Then, we can define a path $w\in C_0^0([0, 1]; \R\oplus\R^d)$
which is linear on each interval $[s_{j-1}, s_j]$
($j=1, \ldots, n$)
by determining the slope vector in $\R\oplus\R^d$
at each interval.
For the sake of computation, here we give the calculation
of the signature explicitly.
Let $g_j=(g^0_j, g^1_j, \ldots, g^d_j)$
be the slope of $w$ over $(s_{j-1}, s_j)$.
Then, for $\alpha=(i_1,\ldots,i_k)\in\mathcal{A}$,
\begin{equation}\label{eq:pwpoly}
	I^\alpha(w)
	=\sum_{\substack{
			1\le \ell_1\le \cdots \le \ell_{n-1}\le k+1\\
			\ell_0=1,\ \ell_n=k+1
	}}
	\prod_{j=1}^n
	\frac{(s_j-s_{j-1})^{\ell_j-\ell_{j-1}}}{(\ell_j-\ell_{j-1})!}
	\prod_{\ell=\ell_{j-1}}^{\ell_j-1}
	g_j^{i_\ell}
\end{equation}
holds.
We can derive this by dividing the integral domain 
into disjoint segments, which are compatible with the partition
$0=s_0<s_1<\cdots<s_n=1$.
If we adopt the notation $g_j^\alpha:=g_j^{i_1}\cdots g_j^{i_k}$
for each $\alpha\in\mathcal{A}$,
$I^\alpha(w)$ can also be written as
\[
I^\alpha(w)
=\sum_{\alpha_1*\cdots*\alpha_n=\alpha}
\prod_{j=1}^n \frac{(s_j-s_{j-1})^{|\alpha_j|}}{|\alpha_j|!}
g_j^{\alpha}.
\]
The latter expression can also be easily derived from Chen's theorem
(Theorem \ref{thm:chen}).

\subsection{Piecewise linear cubature}\label{MCWC}


The following theorem assures the existence of
a cubature formula on Wiener space composed of
continuous piecewise linear paths.

\begin{thm}\label{pw-exist}
	For each positive integer $m$,
	there exist $n$ paths $w_1,\ldots, w_n\in C_0^0(\R\oplus\R^d)$,
	which are piecewise linear and
	$n$ positive weights $\lambda_1, \ldots, \lambda_n$ whose sum is $1$
	that satisfy $n\le |\mathcal{A}(m)|$ and
	\[
	\E{\pi_m(S(B))}=\sum_{i=1}^n \lambda_i\pi_m(S(w_i)).
	\]
	
	The statement still holds even if we require $w_i^0(t)=t$
	for $i=1,\ldots,n$ and $0\le t\le 1$.
\end{thm}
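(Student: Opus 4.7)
The plan is to deduce both assertions from the results already established, handling the two claims by different routes. For the unrestricted statement I would apply Theorem \ref{st-tch}: its proof produces Lie polynomials $\mathcal{L}_1,\ldots,\mathcal{L}_n$ (with $n\le|\mathcal{A}(m)|$) such that each $\pi_m(\exp\mathcal{L}_i)$ lies in $G$ and $\E{\pi_m(S(B))}=\sum_i\lambda_i\pi_m(\exp\mathcal{L}_i)$. The stronger form of Chen's inverse recorded in Remark \ref{rem-piece-lin} then lets me choose, for each $i$, a continuous piecewise linear path $w_i$ with finitely many segments satisfying $\pi_m(\log S(w_i))=\mathcal{L}_i$; since $\pi_m$ commutes with the exponential in the truncated algebra, $\pi_m(S(w_i))=\pi_m(\exp\mathcal{L}_i)$ and the first claim follows.

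For the constrained statement $w_i^0(t)=t$, Remark \ref{rem-piece-lin} no longer suffices (Proposition \ref{counterexample} shows the corresponding surjectivity genuinely fails under a monotone time component), so I would follow the strategy sketched in Remark \ref{rem-lyons}. Set
\[
G_t := \{\pi_m(S(w)) \mid w \in C_0^0([0,1]; \R\oplus\R^d)\ \text{piecewise linear with}\ w^0(t)=t\} \subset G.
\]
Because $B^0_t=t$, the dyadic piecewise linear interpolations $w_k$ of $B$ from Proposition \ref{lyons-time} automatically satisfy $w_k^0(t)=t$, and $\pi_m(S(w_k))\to\pi_m(S(B))$ almost surely; hence $\supp\mathrm{P}_{\pi_m(S(B))}\subset\overline{G_t}$. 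Combining this with Proposition \ref{BM-full} and the finite-dimensionality of $T^{(m)}(E)$ (so that affine hulls coincide with affine hulls of closures),
\[
\aff G_t \subset \aff G = \aff\supp\mathrm{P}_{\pi_m(S(B))} \subset \aff\overline{G_t} = \aff G_t,
\]
so $\aff G_t=\aff G$.

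With matching affine hulls in hand, \eqref{MCCC:lem:exp-ri} yields $\E{\pi_m(S(B))}\in\ri\cv\supp\mathrm{P}_{\pi_m(S(B))}\subset\ri\cv\overline{G_t}$, and the standard finite-dimensional identity $\ri C=\ri\overline{C}$ for convex $C$ (together with $\overline{\cv G_t}=\overline{\cv\overline{G_t}}$) transfers this to $\E{\pi_m(S(B))}\in\ri\cv G_t$. Carath\'{e}odory's theorem applied inside $\aff G_t$ finally extracts at most $\dim\aff G_t+1\le|\mathcal{A}(m)|$ points of $G_t$ (the last inequality uses that the zeroth coordinate of every signature equals $1$, so $\dim\aff G\le|\mathcal{A}(m)|-1$), furnishing piecewise linear paths $w_1,\ldots,w_n$ with $w_i^0(t)=t$ and positive weights summing to one that realize the cubature. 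I expect the main subtlety to lie in this relative-interior transfer from $\cv\overline{G_t}$ back to $\cv G_t$; the required convex-analytic facts are standard in finite dimensions but must be invoked carefully, since $G_t$ is a priori neither closed nor convex.
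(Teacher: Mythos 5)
Your proposal is correct and follows essentially the same route as the paper: Theorem \ref{st-tch} combined with Remark \ref{rem-piece-lin} for the unrestricted claim, and Proposition \ref{lyons-time} plus the relative-interior argument (as sketched in Remark \ref{rem-lyons}) for the case $w_i^0(t)=t$, with Carath\'{e}odory's theorem giving the bound $n\le|\mathcal{A}(m)|$. The only difference is one of exposition: you spell out the convex-analytic transfer from $\ri\cv\overline{G_t}$ to $\ri\cv G_t$ that the paper's one-line proof (and Remark \ref{rem-lyons}) leaves implicit, and that part of your argument is sound.
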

\begin{proof}
	From Theorem \ref{st-tch} and Remark \ref{rem-piece-lin}
	(also from Proposition \ref{lyons-time} if we want $w^0(t) = t$),
	we can easily deduce that there exist a set of
	at most $2|\mathcal{A}(m)|$ continuous piecewise linear paths
	whose truncated (by $\pi_m$) signatures convex hull contains
	$\E{\pi_m(S(B))}$ (in its relative interior).
	Rigorously, we can apply the same argument as the proof of
	Theorem \ref{MCCC:thm:main}.
	Finally, by Carath\'{e}odory's theorem,
	$n\le |\mathcal{A}(m)|$ can actually be achieved. 
\qed\end{proof}
Based on this theorem,
it is sufficient for us to look for cubature formula
within piecewise linear paths.
Our approach to construction of a piecewise linear cubature
is an application of
``Monte Carlo cubature construction" \cite{hayakawa-MCCC}.
Of course, we are not able to generate
a Brownian motion and use it as a candidate for
sample points of cubature formulas,
because it is not a BV path and it cannot be implemented
on computers anyway.
However, the methods in \cite{hayakawa-MCCC} are still applicable here
as we see in the following proposition.

\begin{prop}\label{prop:pw-generate}
	Let $m$ be a positive integer.
	Then, for a sufficiently large $M$
	(the lower bound of $M$ depends on $m$),
	the following statement holds:
	\begin{quote}
		Let a sequence of continuous piecewise linear paths
		$w_1, w_2, \ldots$ be generated identically and independently.
		Assume also each $w_i$ satisfies that
		\begin{itemize}
			\item
			$w_i$ is a path that linearly connects points
			$w_i(k/M)$ $(0\le k\le M)$;
			\item one of (a) and (b) holds:
			\begin{itemize}
				\item[(a)]
				$w_i^0(t)=t$ $(0\le t\le 1)$ holds;
				\item[(b)]
				$w_i^0(0)=0$ and $w_i^0(1)=1$ hold, and
				$w_i^0(\frac{k}M)-w_i^0(\frac{k-1}M)$
				$(1\le k\le M-1)$
				are independent random variables
				and have a density on $\R$ which is positive almost everywhere;
			\end{itemize}
			\item random variables
			$w_i^j(\frac{k}M)-w_i^j(\frac{k-1}M)$
			$(1\le j \le d,\ 
			1\le k\le M)$
			are independent (also from the ones of zero-th coordinate) and have a density on $\R$, which is positive almost everywhere.
		\end{itemize}
		
		Then, with probability one,
		there exists an $N$ such that
		a subset of $\{w_1, w_2,\ldots,w_N\}$ can construct
		a cubature on Wiener space of degree $m$.
	\end{quote}
\end{prop}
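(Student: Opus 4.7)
The plan is to apply Theorem~\ref{MCCC:thm:main} to the i.i.d.\ sequence $\pi_m(S(w_1)), \pi_m(S(w_2)), \ldots$, with $X = B$ and $\bm\phi = \pi_m\circ S$, invoking the relaxation provided by Remark~\ref{rem-assumption}. It therefore suffices to verify
\[
\aff\supp\mathrm{P}_{\pi_m(S(w_1))}=\aff\supp\mathrm{P}_{\pi_m(S(B))}
\quad\text{and}\quad
\supp\mathrm{P}_{\pi_m(S(w_1))}\supset\supp\mathrm{P}_{\pi_m(S(B))};
\]
combined with Proposition~\ref{BM-full} and \eqref{MCCC:lem:exp-ri}, these imply the essential condition $\E{\pi_m(S(B))}\in\ri\cv\supp\mathrm{P}_{\pi_m(S(w_1))}$ that drives the proof of Theorem~\ref{MCCC:thm:main}.

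The first half is straightforward. Since every $w_1$-increment has a density positive almost everywhere on $\R$, the support $\supp\mathrm{P}_{\pi_m(S(w_1))}$ equals the closure of the image of the polynomial map sending the free increments of $w_1$ to $\pi_m(S(w_1))$ via~\eqref{eq:pwpoly}. Because every realization satisfies $w_1^0(1)=1$, one has $\supp\mathrm{P}_{\pi_m(S(w_1))}\subset\pi_m(G)\subset\aff G$, and Proposition~\ref{BM-full} yields the easy direction $\aff\supp\mathrm{P}_{\pi_m(S(w_1))}\subset\aff\supp\mathrm{P}_{\pi_m(S(B))}$.

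The core step is the converse: to exhibit enough realizations of $w_1$ whose signatures cover $\aff G$ and place $\E{\pi_m(S(B))}$ in the relative interior of their convex hull. I would start from Theorem~\ref{st-tch} combined with Remark~\ref{rem-lyons}, obtaining piecewise linear paths $v_1,\ldots,v_K$ with $v_i^0(t)=t$ such that $\aff\{\pi_m(S(v_i))\}=\aff G$ and $\E{\pi_m(S(B))}\in\ri\cv\{\pi_m(S(v_i))\}$. Their breakpoints are arbitrary reals, but \eqref{eq:pwpoly} is jointly continuous in (breakpoints, slopes), and both the affine-hull condition and the relative-interior containment survive any sufficiently small perturbation of the breakpoints. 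Hence, once $M$ is large enough that every breakpoint of every $v_i$ admits a grid point $k/M$ within distance $1/M$, the perturbed paths $v_i'$ have breakpoints in $\{k/M\}_{k=0}^M$ and retain the two crucial geometric conditions. In case~(a) this directly gives $\pi_m(S(v_i'))\in\supp\mathrm{P}_{\pi_m(S(w_1))}$; in case~(b) the same inclusion holds because the configuration with all $M-1$ free $w_1^0$-increments equal to $1/M$ lies in the support of their joint density.

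Putting the pieces together,
\[
\E{\pi_m(S(B))}\in\ri\cv\{\pi_m(S(v_i'))\}\subset\ri\cv\supp\mathrm{P}_{\pi_m(S(w_1))},
\]
so Remark~\ref{rem-assumption} applies and Theorem~\ref{MCCC:thm:main} produces, almost surely, an $N$ with $\E{\pi_m(S(B))}\in\cv\{\pi_m(S(w_1)),\ldots,\pi_m(S(w_N))\}$; a basic feasible solution of the associated linear programme then extracts the required cubature subset. The main obstacle lies in the perturbation step: one must quantify the modulus of continuity of $\pi_m\circ S$ in the breakpoint positions in order to produce an explicit lower bound on $M$ preserving the relative-interior containment. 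This ultimately reduces to the openness of $\ri\cv\{\cdot\}$ within a finite-dimensional affine space, but it is the one point where care is genuinely needed.
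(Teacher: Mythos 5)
Your proposal is correct and follows essentially the same route as the paper's proof: fix finitely many piecewise linear paths whose truncated signatures realize the relative-interior containment of $\E{\pi_m(S(B))}$ (via Theorem \ref{st-tch}, Remark \ref{rem-piece-lin} and Remark \ref{rem-lyons}), adjust them to the grid $\{k/M\}$, use the full-support densities of the increments to show each target signature is approximated with positive probability, and conclude by the argument of Theorem \ref{MCCC:thm:main}. The only caveat is that the support inclusion $\supp\mathrm{P}_{\pi_m(S(w_1))}\supset\supp\mathrm{P}_{\pi_m(S(B))}$ announced at the outset is never verified (and is not obviously true for a fixed $M$), but your actual argument never uses it --- placing the finitely many perturbed signatures inside $\supp\mathrm{P}_{\pi_m(S(w_1))}$ is exactly what the paper's proof does and is all that is needed.
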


\begin{proof}
	Take $M$ as large as we can find (at most) $2|\mathcal{A}(m)|$
	piecewise linear paths
	(denoted by $\tilde{w}_\ell$) with at most $M$ linear segments
	such that $\cv\{\pi_m(S(\tilde{w}_\ell))\}_\ell$ contains
	$\E{\pi_m(S(B))}$ in its relative interior.
	The existence of an $M$ is assured by the proof of
	Theorem \ref{pw-exist}
	(or see Remark \ref{rem-piece-lin},
	Theorem \ref{st-tch}, and Remark \ref{rem-lyons}).
	
	From (\ref{eq:pwpoly}), the truncated signature of each $w_i$ is
	a polynomial of random variables $w_i^j(\frac{k}M)-w_i^j(\frac{k-1}M)$
	$(0\le j \le d,\ 1\le k\le M,\ (j, k)\ne (0, M))$.
	Our assumption assures that these variables take values
	in every neighborhood of some point with a positive probability.
	In particular, it implies that
	\[
	\mathrm{P}\left(
	\|\pi_m(S(\tilde{w}_\ell))-\pi_m(S(w_i))\|<\ve
	\right)	>0,
	\]
	where $\|\cdot\|$ is the Euclidean norm on
	$T^{(m)}(E)\simeq F$,
	holds for each $i, \ell$ and $\ve>0$.
	Note that the left-hand side probability
	does not depend on $i$ by i.i.d. assumption.
	Therefore, the argument in the proof of Theorem \ref{MCCC:thm:main}
	holds here again and we obtain the desired assertion.
\qed\end{proof}

\begin{rem}
	For the scheme (b), the condition $w_i^0(1)=1$ is necessary to assure that
	each $\pi_m(S(w_i))$ is contained in $G$ defined in Section \ref{proof-stt}.
\end{rem}

Note that the generation rule of sample paths in this proposition
is just one of
infinitely many possible examples.
We may alternatively, for instance, directly generate
$w_i^j(k/M)$ independently.

Proposition \ref{prop:pw-generate} assures only the existence of
``some large" $M$ and $N$, so it might be
numerically hard to find cubature formulas from this approach.
However, it is beneficial to know that the construction can be reduced
at least to the stage of machine power.


\subsection{Numerical experiments}\label{num-exp}

Explaining our simple numerical method
based on a Monte Carlo approach
we first describe the algorithm for computing the signature of
piecewise linear paths, and then present
our Monte Carlo approach and its result in some pairs of $(d, m)$.

\paragraph{Calculation of signature}
Note that hereafter $d$ and $m$ are regarded as already given parameters.
For positive integers $M$ and $N$,
we generate $N$ piecewise linear paths
(denoted by $w_1, \ldots, w_N$)
with $M$ intervals of time
(see Proposition \ref{prop:pw-generate}).
The time complexity of this paths generation
is $\mathrm{O}(NMd)$.
Then, we address each component of generated paths by
\[
\text{PATH}[i, j, k]:
= w_i^k\left(\frac{j}{M}\right) - w_i^k\left(\frac{j-1}{M}\right)
\qquad
(1\le i\le N,\ 1\le j\le M,\ 0\le k\le d)
.
\]
In all the experiments, we generated $\text{PATH}[i,j,k]$ (with $k\ne0$)
so that it follows the centered normal distribution of
variance $1/M$.
We set $\text{PATH}[i,j,0]=1/M$ for each $i, j$ and denote $w_i$ by just writing $\text{PATH}[i]$ for each $i$.

As we consider not so large $M$ in this study,
we calculate the signature of generated paths
by a simple dynamic programming (Algorithm \ref{alg:calc_signature}).
In the algorithm,
we calculate the signature of $w_i$ over $[0, k/M]$ for $k=1,\ldots, M$,
by using the expression (\ref{eq:pwpoly}).
The time complexity of this algorithm is $\mathrm{O}(M|\mathcal{A}(m)|^2)$,
though a pruning of possible multiindices $(\alpha, \beta)$ helps a bit.

\begin{algorithm}[h]
	\caption{Calculation of ($i$-th) signature}
	\label{alg:calc_signature}
	\begin{algorithmic}
		\Require{$M, \text{PATH}[i]$}
		\Initialize{
			$\text{SIGNATURE}[i,\emptyset]=1$ \\
			$\text{SIGNATURE}[i,\alpha]=0\ (\alpha \in \mathcal{A}_0(m))$\\
			$\text{TEMPORARY}[\alpha]\ (\alpha\in\mathcal{A}(m))$\\
			$\text{NEXT}[\alpha]\ (\alpha\in\mathcal{A}(m))$
		}
		\For{$j = 1, \ldots, M$}
		\For{$\alpha\in\mathcal{A}(m)$}
		\State{$\text{TEMPORARY}[\alpha]=\text{SIGNATURE}[i,\alpha]$}
		\State{$\text{NEXT}[\alpha]
			=\frac{1}{|\alpha|!}
			\prod_{k=1}^{|\alpha|}\text{PATH}[i, j, \alpha_k]
			$}
		\State{$\text{SIGNATURE}[i,\alpha]=0$}
		\EndFor
		\For{$(\alpha, \beta) \in\mathcal{A}(m)\times \mathcal{A}(m)$}
		\If{$\alpha * \beta\in \mathcal{A}(m)$}
		\State{$\text{SIGNATURE}[i, \alpha*\beta]
			+\!\!
			=\text{TEMPORARY}[\alpha]\cdot\text{NEXT}[\beta]$}
		\EndIf
		\EndFor
		\EndFor
		\Ensure{$\text{SIGNATURE}[i]$}
	\end{algorithmic}
\end{algorithm}


\paragraph{Monte Carlo approach}

In the approach based on Monte Carlo sampling,
we simply generate many paths and determine by solving an LP problem
whether or not we can construct a cubature formula of desired degree
from generated paths.

The part of solving an LP problem was performed using IBM ILOG CPLEX Optimization
Studio (\url{https://www.ibm.com/analytics/cplex-optimizer}, version 12.10;
 CPLEX hereafter).
From Proposition \ref{prop:pw-generate}, for a sufficiently large $N$
we can construct a cubature formula using a subset of paths
$\{w_1, \ldots, w_N\}$.

We conducted experiments for six cases
$(d, m) = (2, 3), (3, 3), (4, 3), (2, 5), (3, 5), (2, 7)$
and for each $(d, m)$,
set $N=2|\mathcal{A}(m)|, 4|\mathcal{A}(m)|, 8|\mathcal{A}(m)|$,
$M=2, 4, 8, 16, 32$
and examined if we could construct a cubature formula by using CPLEX
(note that $|\mathcal{A}(m)|$ depends on $d$).
The following tables show how many times out of 10 trials
we successfully obtained cubature formula.
Blanks in the tables mean that the corresponding experiments
were not performed because we already got 10 successes out of 10 with a smaller $N$.

\begin{table}[h]
	\begin{minipage}{0.5\hsize}
		\begin{center}
			\caption{$(d, m)=(2, 3)$, $|\mathcal{A}(m)|=20$}
			\begin{tabular}{|c||c|c|c|c|c|}
				\hline 
				$N\backslash M$ & 2 & 4 & 8 & 16 & 32 \\ 
				\hhline{|=#=|=|=|=|=|}
				$2|\mathcal{A}(m)|$ & 3 & 4 & 3 & 2 & 2 \\ 
				\hline 
				$4|\mathcal{A}(m)|$ & 10 & 10 & 10 & 10 & 10 \\ 
				\hline 
				$8|\mathcal{A}(m)|$ &  &  &  &  &  \\ 
				\hline 
			\end{tabular}
		\end{center}
	\end{minipage}
	\begin{minipage}{0.5\hsize}
		\begin{center}
			\caption{$(d, m)=(3, 3)$, $|\mathcal{A}(m)|=47$}
			\begin{tabular}{|c||c|c|c|c|c|}
				\hline 
				$N\backslash M$ & 2 & 4 & 8 & 16 & 32 \\ 
				\hhline{|=#=|=|=|=|=|}
				$2|\mathcal{A}(m)|$ & 1 & 2 & 2 & 1 & 1 \\ 
				\hline 
				$4|\mathcal{A}(m)|$ & 10 & 10 & 10 & 10 & 10 \\ 
				\hline 
				$8|\mathcal{A}(m)|$ &  &  &  &  &  \\ 
				\hline 
			\end{tabular}
		\end{center}
	\end{minipage}
	\begin{minipage}{0.5\hsize}
		\begin{center}
			\caption{$(d, m)=(4, 3)$, $|\mathcal{A}(m)|=94$}
			\begin{tabular}{|c||c|c|c|c|c|}
				\hline 
				$N\backslash M$ & 2 & 4 & 8 & 16 & 32 \\ 
				\hhline{|=#=|=|=|=|=|}
				$2|\mathcal{A}(m)|$ & 2 & 0 & 0 & 2 & 1 \\ 
				\hline 
				$4|\mathcal{A}(m)|$ & 10 & 10 & 10 & 10 & 10 \\ 
				\hline 
				$8|\mathcal{A}(m)|$ &  &  &  &  &  \\ 
				\hline 
			\end{tabular}
		\end{center}
	\end{minipage}
	\begin{minipage}{0.5\hsize}
		\begin{center}
			\caption{$(d, m)=(2, 5)$, $|\mathcal{A}(m)|=119$}
			\begin{tabular}{|c||c|c|c|c|c|}
				\hline 
				$N\backslash M$ & 2 & 4 & 8 & 16 & 32 \\ 
				\hhline{|=#=|=|=|=|=|}
				$2|\mathcal{A}(m)|$ & 0 & 0 & 0 & 0 & 0 \\ 
				\hline 
				$4|\mathcal{A}(m)|$ & 7 & 10 & 10 & 10 & 10 \\ 
				\hline 
				$8|\mathcal{A}(m)|$ & 10 &  &  &  &  \\ 
				\hline 
			\end{tabular}
		\end{center}
	\end{minipage}
	\begin{minipage}{0.5\hsize}
		\begin{center}
			\caption{$(d, m)=(3, 5)$, $|\mathcal{A}(m)|=516$}
			\begin{tabular}{|c||c|c|c|c|c|}
				\hline 
				$N\backslash M$ & 2 & 4 & 8 & 16 & 32 \\ 
				\hhline{|=#=|=|=|=|=|}
				$2|\mathcal{A}(m)|$ & 0 & 0 & 0 & 0 & 0 \\ 
				\hline 
				$4|\mathcal{A}(m)|$ & 2 & 10 & 10 & 10 & 10 \\ 
				\hline 
				$8|\mathcal{A}(m)|$ & 10 &  &  &  &  \\ 
				\hline 
			\end{tabular}
		\end{center}
	\end{minipage}
	\begin{minipage}{0.5\hsize}
		\begin{center}
			\caption{$(d, m)=(2, 7)$, $|\mathcal{A}(m)|=696$}
			\begin{tabular}{|c||c|c|c|c|c|}
				\hline 
				$N\backslash M$ & 2 & 4 & 8 & 16 & 32 \\ 
				\hhline{|=#=|=|=|=|=|}
				$2|\mathcal{A}(m)|$ & 0 & 0 & 0 & 0 & 0 \\ 
				\hline 
				$4|\mathcal{A}(m)|$ & 0 & 0 & 0 & 1 & 8 \\ 
				\hline 
				$8|\mathcal{A}(m)|$ & 0 & 0 & 10 & 10 & 10 \\ 
				\hline 
			\end{tabular}
		\end{center}
	\end{minipage}
\end{table}

From these results,
one may expect that the change in $m$
is more essential than $d$ where we need larger number of partitions
and ratio $N/|\mathcal{A}(m)|$ as $m$ gets larger,
though more experiments are necessary.


\section{Concluding remarks}\label{sum-dis}

In this paper, we have demonstrated that
 piecewise linear cubature formula can be constructed on Wiener space
through a Monte Carlo sampling and an LP problem.
Our construction is supported by the technical contribution,
which extends stochastic Tchakaloff's theorem using our characterization
of the distribution of Stratonovich iterated integrals.
We confirmed that for small pairs of $(d, m)$ our algorithm actually works
in numerical experiments.

Although we have shown that one can theoretically construct
cubature formulas of any dimension and degree,
the number of paths used in our construction only attains
the Tchakaloff bound, and therefore it requires too much computational cost
for large $(d, m)$ in practice.
Therefore, we may consider reducing the number of paths
by using additional optimization techniques.

\begin{acknowledgements}
The first author would like to thank Terry Lyons
for his insightful comments on Proposition \ref{counterexample}.
The authors would like to thank Enago (\url{www.enago.jp}) for the manuscript review and editing support.
The authors are also grateful to the reviewers
for their careful reading and constructive feedback.
This study was supported by the Japan Society for the Promotion of Science
with KAKENHI (17K14241 to K.T.).
\end{acknowledgements}

%
%

\bibliographystyle{spmpsci}      
\bibliography{cite}   


\appendix
\normalsize

\section{Proof of Lemma \ref{long}}

Here, we give an elementary proof of Lemma \ref{long}.

\begin{proof}
	This proof exploits several arguments in \cite[Chapter 5]{klo92}.
	First, we prove the result for the Ito integrals.
	Define
	\[
	\mathcal{A}^\prime:=\{\alpha\in\mathcal{A}\mid \text{$\alpha$
		contains at least one nonzero index}\}.
	\]
	We define for $t\ge0$ that $\I^\emptyset_t:=1$ and
	\[
	\I^\alpha_t:=\int_{0<t_1<\cdots<t_k<t}\ds B_{t_1}^{i_1} \cdots
	\ds B_{t_k}^{i_k}
	\]
	for each $\alpha=(i_1,\ldots,i_k)\in\mathcal{A}$.
	We also define
	$\alpha-:=(i_1,\ldots,i_{k-1})$ and 
	$s(\alpha)=i_k$ for $\alpha\ne\emptyset$.
	Note that it holds that
	\[
	\I^\alpha_t=\int_0^t \I^{\alpha-}_r \dd B_r^{s(\alpha)}.
	\]
	
	For a multiindex $\alpha=(i_1,\ldots,i_k)\in\mathcal{A}$,
	let $\alpha^+$ be the sequence defined by nonzero indices of $\alpha$.
	For example, if $\alpha=(0, 2, 0, 1, 1, 0)$, then $\alpha^+=(2, 1, 1)$.
	
	Let $u=(u_t)_{t\ge0}$ be a progressively measurable
	and second mean integrable stochastic process.
	Then, for each index $i\ne0$,
	\[
	\int_0^t\left(\int_0^su_r \dd B^i_r\right)\ds s
	=\int_0^t\left(\int_0^t u_r 1_{\{s>r\}}(r, s)\dd s\right) \ds B^i_r
	=\int_0^t (t-r)u_r \dd B_r^i
	\]
	holds.
	By using Ito isometry and this relation repeatedly,
	we can show that
	\begin{equation}
		\E{\I^\alpha_t\I^\beta_t}=0,\qquad \text{if
			$\alpha, \beta\in \mathcal{A}^\prime$ and $s(\alpha^+)\ne s(\beta^+)$}.
		\label{indep-1}
	\end{equation}
	Indeed, in the case $k$-times zeros appear in the suffix of $\alpha$,
	we can show inductively
	\begin{align}
		&\int_{0<t_1<\cdots<t_k<t}\left(\int_0^{t_1} u_r \dd B_r^i\right) \ds t_1
		\cdots\ds t_k \nonumber\\
		&=\cdots=\int_{0<t_j<\cdots<t_k<t}\left(\int_0^{t_j}
		\frac{(t_j-r)^{j-1}}{(j-1)!} u_r \dd B^i_r
		\right)\ds t_j \cdots \ds t_k\nonumber\\
		&=\cdots=\int_0^t\frac{(t-r)^{k}}{k!}u_r\dd B_r^i,
		\label{indep-repeat}
	\end{align}
	so, we can finally apply the following Ito isometry:
	\begin{quote}
		for progressively measurable
		and second mean integrable stochastic process $u$ and $v$, it holds that
		\[
		\E{\left(\int_0^tu_s\dd B_s^i\right)\left(\int_0^tv_s\dd B_s^j\right)}
		=\delta^{ij}\E{\int_0^t u_sv_s \dd s},
		\]
		where $i,j\in\{1,\ldots,d\}$ and $\delta^{ij}$ is Kronecker's delta.
	\end{quote}
	
	We then prove a stronger assertion than (\ref{indep-1}), which states
	\begin{equation}
		\E{\I^\alpha_t\I^\beta_t}=0,\qquad
		\text{if $\alpha\text{ or }\beta\in\mathcal{A}^\prime$, and
			$\alpha^+\ne \beta^+$}.
		\label{indep-2}
	\end{equation}
	Let $\alpha=\tilde{\alpha}*(i, \underbrace{0, \ldots,0}_{k \text{ times}})$
	and $\beta=\tilde{\beta}*(j, \underbrace{0, \ldots,0}_{\ell \text{ times}})$.
	It suffices to consider the case $i=j\ne0$, and we have
	\begin{align}
		\E{\I^\alpha_t\I^\beta_t}
		&=\E{\left(\int_0^t
			\frac{(t-s)^k}{k!}
			\I^{\tilde{\alpha}}_s \dd B_s^i\right)
			\left(\int_0^t
			\frac{(t-s)^\ell}{\ell!}
			\I^{\tilde{\beta}}_s \dd B_s^i\right)}\nonumber\\
		&=\E{\int_0^t\frac{(t-s)^{k+\ell}}{k!\ell!}
			\I^{\tilde{\alpha}}_s\I^{\tilde{\beta}}_s
			\dd s
		}\nonumber\\
		&=\int_0^t \frac{(t-s)^{k+\ell}}{k!\ell!}
		\E{\I^{\tilde{\alpha}}_s\I^{\tilde{\beta}}_s} \dd s.\label{indep-isometry}
	\end{align}
	Therefore, by an inductive argument, it only remains to prove that
	$\E{\I^\alpha_t\I^\beta_t}=0$ in the case $\alpha$ or $\beta$ only contain zeros
	and $\alpha^+\ne\beta^+$,
	but this case is trivial as the one is a constant and
	the other's expectation becomes zero
	from (\ref{indep-repeat}).
	We now have completed the proof of (\ref{indep-2}).
	
	From these results, we have the decomposition
	\[
	\E{\left(
		\sum_{\alpha\in\mathcal{A}} c_\alpha\I^\alpha_t
		\right)^2}
	=\sum_{\beta\in\mathcal{A}^+} \E{
		\left(\sum_{\alpha\in\mathcal{A},\ \alpha^+=\beta}
		c_\alpha\I^\alpha_t
		\right)^2
	},
	\]
	where all but finite $c_\alpha$ equal to zero and $\mathcal{A}^+$ denotes
	the set of all multiindices that contain no zeros.
	Therefore, it suffices to consider sums of the form
	$\sum_{i=1}^n c_i \mathcal{I}^{\alpha_i}_t$,
	with $c_1,\ldots,c_n\in\R$ and $\alpha_1^+=\cdots=\alpha_n^+\ne\emptyset$.
	Note that $\alpha_1,\ldots,\alpha_n$ can be taken pairwaise different.
	One may assume this sum almost surely equals to zero.
	Next, we prove that
	$\sum_{i=1}^nc_iI^{\alpha_i}(w)_{0, t}$ is also zero for
	the $w$ with $w(s)=s$
	($I^\alpha(w)_{s, t}$ is
	$S(w)_{s, t}$ entry corresponding to the multiindex $\alpha$).
	Let each $\alpha_i$ have the form
	$\alpha_i=\beta_i * (j, \underbrace{0,\ldots,0}_{k_i\text{ times}})$,
	where $j\ne0$ is an index independent of $i$.
	
	From (\ref{indep-repeat}),
	we have
	\begin{align}
		\E{\left(\sum_{i=1}^n c_i\I^{\alpha_i}_t\right)^2}
		&=\E{\left(\int_0^t
			\sum_{i=1}^n\frac{(t-r)^{k_i}}{k_i!}c_i
			\I^{\beta_i}_r \dd B_r^j
			\right)^2}\nonumber\\
		&=\E{
			\int_0^t \left(
			\sum_{i=1}^n\frac{(t-r)^{k_i}}{k_i!}c_i
			\I^{\beta_i}_r
			\right)^2
			\ds r
		}\nonumber\\
		&=\int_0^t \E{
			\left(
			\sum_{i=1}^n\frac{(t-r)^{k_i}}{k_i!}c_i
			\I^{\beta_i}_r
			\right)^2
		} \ds r.\label{eq:rev-1}
	\end{align}
	If the left-hand side is zero,
	the integrand in the right-hand side is also zero for all $0\le r\le t$
	because it is continuous in $r$.
	
	Here, we note that the deterministic counterpart
	of (\ref{indep-repeat}) holds.
	Indeed,
	if $u=(u_t)_{t\ge0}$ is a continuous $\R$-valued path,
	then for a $w\in C_0^0([0, 1]; \R\oplus\R^d)$ with $w^0(t)=t$,
	we have
	\begin{align}
		&\int_{0<t_1<\cdots<t_k<t}\left(\int_0^{t_1} u_r \dd w_r^i\right) \ds t_1
		\cdots\ds t_k \nonumber\\
		&=\cdots=\int_{0<t_j<\cdots<t_k<t}\left(\int_0^{t_j}
		\frac{(t_j-r)^{j-1}}{(j-1)!} u_r \dd w^i_r
		\right)\ds t_j \cdots \ds t_k\nonumber\\
		&=\cdots=\int_0^t\frac{(t-r)^{k}}{k!}u_r\dd w_r^i.
		\label{indep-repeat-2}
	\end{align}
	\begin{align*}
	\end{align*}
	Therefore, we obtain
	\begin{equation}
		\sum_{i=1}^nc_i I^{\alpha_i}(w)_{0, t}
		=\int_0^t \left(
		\frac{(t-r)^{k_i}}{k_i!}
		c_i I^{\beta_i}(w)_{0, r}
		\right)\ds w_r^j.
		\label{rev-3}
	\end{equation}
	As we have already obtained
	$\sum_{i=1}^n\frac{(t-r)^{k_i}}{k_i!}c_i
	\I^{\beta_i}_r=0$ for $0\le r\le t$,
	the problem reduces to that of $\beta_1, \ldots, \beta_n$.
	Therefore, by inductive arguments with respect to $|\alpha_1^+|$
	we can prove that
	\[
	\sum_{i=1}^nc_i\mathcal{I}_t^{\alpha_i}=0
	\quad \Longrightarrow \quad 
	\sum_{i=1}^nc_iI^{\alpha_i}(w)_{0, t}=0,
	\]
	where the base step is trivial as it only contains the integral
	with respect to time.
	
	Though we have to extend this result when $t=1$ to all the $w$ satisfying
	only $w^0(1)=1$,
	it can be proved by using perturbation arguments.
	More precisely, we can prove the result for piecewise linear paths
	with $w^0(1)=1$ as $\sum_ic_iI^{\alpha_i}(w)$ is a polynomial 
	of increments in the path that equals to zero at infinitely many points
	(if $w^0$ is monotone increasing,
	then we can reparametrize it so that $w^0(t)=t$).
	Indeed, proving for piecewise linear paths is sufficient
	\cite[Theorem 7.28]{sde-book}.
	
	It remains to modify the result for Stratonovich integrals.
	The relation between multiple Stratonovich integrals and Ito integrals
	is known as follows \cite{klo92}:
	\begin{align*}
		I^\emptyset(B)_{0,t}&=\I^\emptyset_t=1,\\
		I^{(i)}(B)_{0, t}&=\I^{(i)}_t \quad (i=0, 1,\ldots,d),\\
		I^\alpha(B)_{0, t}
		&=\int_0^t I^{\alpha-}(B)_{0, r} \dd B_r^{s(\alpha)}
		+\frac12\cdot1_{\{s(\alpha)=s(\alpha-)\ne0\}}
		\int_0^t I^{(\alpha-)-}(B)_{0, r} \dd r
		\quad (|\alpha|\ge2).
	\end{align*}
	Therefore, each $I^\alpha(B)_{0, t}$ is represented as a positive combination
	of $\I^\beta_t$ such that
	$\beta$ can be acquired by replacing two consecutive same nonzero
	indices by a zero some times.
	The assumption $\sum_{i=1}^n c_iI^{\alpha_i}(B)=0$ holds almost surely
	($c_i\ne0$ for all $i$).
	We shall again prove $\sum_{i=1}^nc_iI^{\alpha_i}(w)$
	by inductive arguments.
	
	Let $i_0\in\arg\max_i|\alpha_i^+|$ and $\beta:=\alpha_{i_0}^+$.
	Then, by the above expansion, $\sum_{i=1}^nc_iI^{\alpha_i}(B)$
	can be rewritten as a sum of Ito integrals.
	In particular,
	\[
	\sum_{i=1}^nc_iI^{\alpha_i}(B)
	-
	\sum_{i:\,\alpha_i^+=\beta}c_i\mathcal{I}^{\alpha_i}_1
	\]
	is represented as a weighted sum of $\mathcal{I}^\alpha_1$
	with $\alpha^+\ne\beta$.
	Therefore, $\sum_{i:\,\alpha_i^+=\beta}c_i\mathcal{I}^{\alpha_i}_1=0$
	holds almost surely, and so $\sum_{i:\,\alpha_i^+=\beta}
	c_iI^{\alpha_i}(w)=0$ holds for all the valid $w$.
	
	From Theorem \ref{thm:chen} and Proposition \ref{bm-sig},
	with probability one there exists some (random)
	$w$ such that $\pi_n(S(B))=\pi_n(S(w))$,
	where sufficiently large $n$ is taken.
	By using such $w$, we obtain
	\[
	\sum_{i:\, \alpha_i^+=\beta}c_iI^{\alpha_i}(B)
	=\sum_{i:\, \alpha_i^+=\beta}c_iI^{\alpha_i}(w)
	=0.
	\]
	Therefore, if $\sum_{i=1}^nc_iI^{\alpha_i}(B)=0$ holds almost surely,
	then we can prove inductively (with respect to some order over
	$\mathcal{A}^+$),
	for each $\beta\in\mathcal{A}^+$ and $w$,
	\[
	\sum_{i:\,\alpha_i^+=\beta}c_iI^{\alpha_i}(w)=0,\qquad
	\sum_{i:\,\alpha_i^+=\beta}c_iI^{\alpha_i}(B)=0.
	\]
	By considering the sum, we obtain
	$\sum_{i=1}^nc_iI^{\alpha_i}(w)=0$ and the proof is completed.
	\qed\end{proof}

\begin{rem}
	In the last part of the above proof,
	we have essentially used the assertion
	(actually the inverse also follows from the above proof)
	\[
	\sum_\alpha c_\alpha\mathcal{I}^\alpha_t = 0
	\quad \Longrightarrow \quad
	\sum_\alpha c_\alpha I(B)_{0, t}=0.
	\]
	Although we have proved it via Lie-algebraic arguments, which exploit
	Theorem \ref{thm:chen} and Proposition \ref{bm-sig},
	it can be directly proved by repeatedly
	using the relations (\ref{indep-repeat})
	and (\ref{eq:rev-1}).
\end{rem}

\end{document}